\renewenvironment{abstract}
{\small\vspace{-1em}
\begin{center}
\bfseries\abstractname\vspace{-.5em}\vspace{0pt}
\end{center}
\list{}{
\setlength{\leftmargin}{0.6in}%
\setlength{\rightmargin}{\leftmargin}}%
\item\relax}
{\endlist}
\declaretheorem[name=Theorem, numberwithin=section]{theorem}
\declaretheorem[name=Lemma, sibling=theorem]{lemma}
\declaretheorem[name=Corollary, sibling=theorem]{corollary}
\declaretheorem[name=Claim, sibling=theorem]{claim}
\declaretheorem[name=Observation, sibling=theorem]{observation}
\def\cqedsymbol{\ifmmode$\lrcorner$\else{\unskip\nobreak\hfil
\penalty50\hskip1em\null\nobreak\hfil$\lrcorner$
\parfillskip=0pt\finalhyphendemerits=0\endgraf}\fi}
\newcommand{\cqed}{\renewcommand{\qed}{\cqedsymbol}}
\def\cH{\mathcal{H}}
\def\cP{\mathcal{P}}
\def\cS{\mathcal{S}}
\def\Pp{\cP}
\def\cQ{\mathcal{Q}}
\def\cS{\mathcal{S}}
\def\EE{\mathbb{E}}
\def\PP{\mathbb{P}}
\def\aS3t{\overrightarrow{S_{t}}}
\DeclareMathOperator{\fst}{first}
\DeclareMathOperator{\lst}{last}
\DeclareMathOperator{\tail}{tail}
\DeclareMathOperator{\head}{head}
\def\onion{onion\xspace}
\newcommand{\os}[1]{${#1}$-onion-star\xspace}
\newcommand{\pqpair}{$(\cP,\cQ)$}
\def\bar#1{\overline{#1}}
\renewcommand{\leq}{\leqslant}
\renewcommand{\geq}{\geqslant}
\newcommand{\trimmed}[1]{\smash{\bar{#1}}}
\newcommand{\comp}[1]{{\langle #1\rangle\!}}
\newcommand{\N}{\mathbb{N}}
\title{On digraphs without onion star immersions\thanks{This work is a part of
    the projects CUTACOMBS (ŁB, OD, KO) and BOBR (MP) that have received funding from the European
    Research Council (ERC) under the European Union's Horizon 2020 research and
    innovation programme (grant agreements No 714704 and 948057, respectively).}}
\author[1]{\L{}ukasz Bo\.zyk}
\author[1,2]{Oscar Defrain}
\author[1,3]{\\Karolina Okrasa}
\author[1]{Micha\l{} Pilipczuk}
\affil[1]{Institute of Informatics, University of Warsaw, Poland}
\affil[2]{LIS, Aix-Marseille Universit\'e, France}
\affil[3]{Faculty of Mathematics and Information Science,\protect\\Warsaw University of Technology, Poland}
\date{November 29, 2022}
\begin{document}

\maketitle

\begin{textblock}{20}(-1.9, 7.38)
\includegraphics[width=40px]{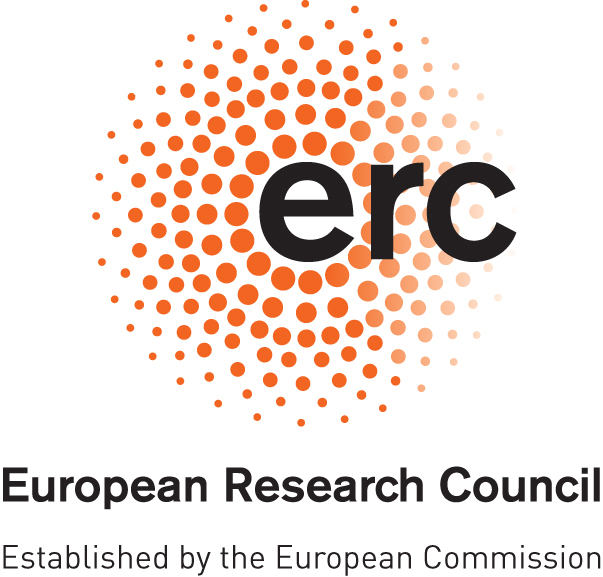}%
\end{textblock}
\begin{textblock}{20}(-2.15, 7.68)
\includegraphics[width=60px]{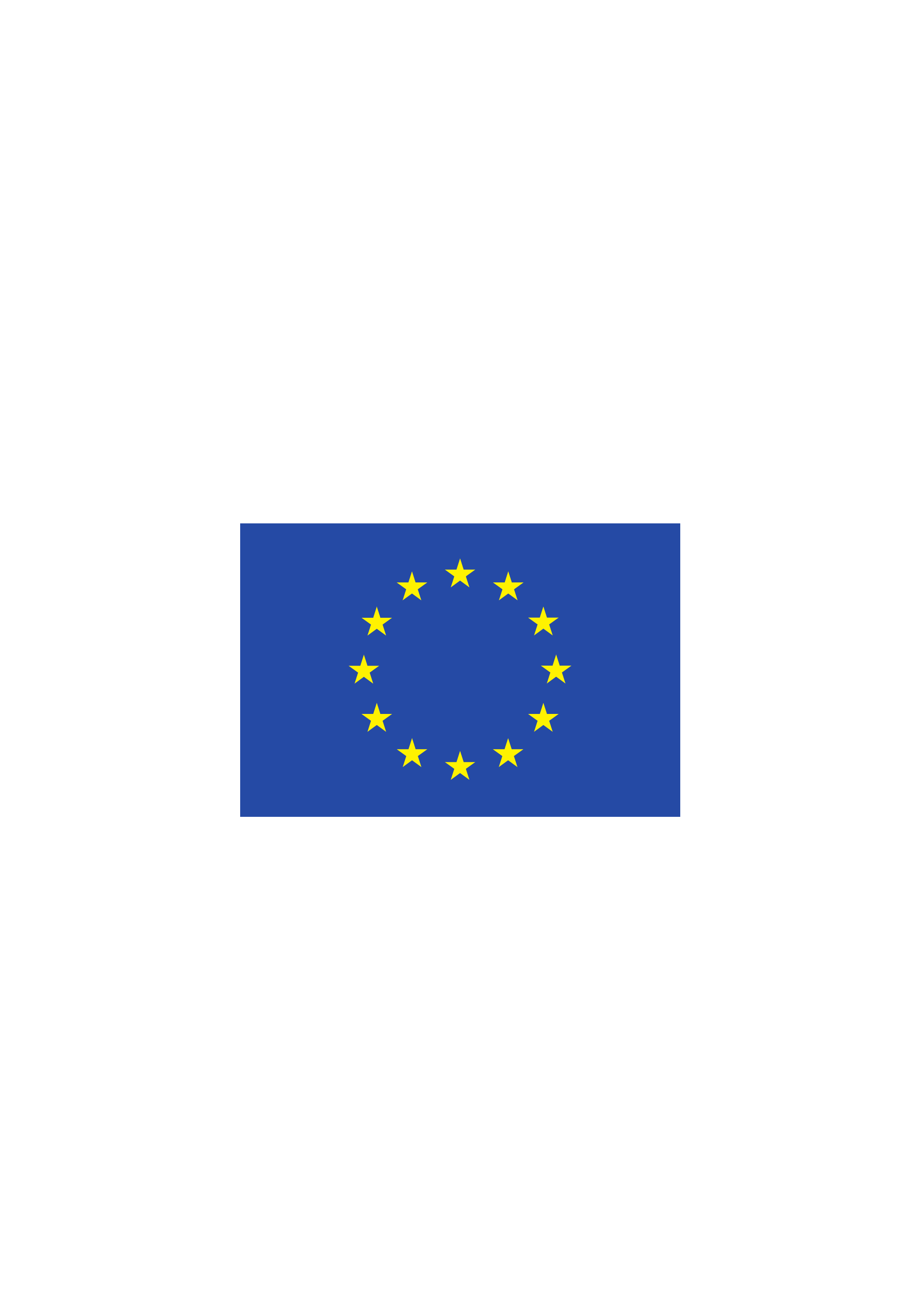}%
\end{textblock}

\begin{abstract}
The {\em{$t$-onion star}} is the digraph obtained from a star with $2t$ leaves by replacing every edge by a triple of arcs, where in $t$ triples we orient two arcs away from the center, and in the remaining $t$ triples we orient two arcs towards the center. Note that the $t$-onion star contains, as an immersion, every digraph on $t$ vertices where each vertex has outdegree at most $2$ and indegree at most $1$, or vice versa.

We investigate the structure in digraphs that exclude a fixed onion star as an immersion. The main discovery is that in such digraphs, for some duality statements true in the undirected setting we can prove their directed analogues. More specifically, we show the next two statements. 
\begin{itemize}[nosep]
 \item There is a function $f\colon \N\to \N$ satisfying the following: If a digraph $D$ contains a set $X$ of $2t+1$ vertices such that for any $x,y\in X$ there are $f(t)$ arc-disjoint paths from $x$ to $y$, then $D$ contains the $t$-onion star as an immersion.
 \item There is a function $g\colon \N\times \N\to \N$ satisfying the following: If $x$ and $y$ is a pair of vertices in a digraph $D$ such that there are at least $g(t,k)$ arc-disjoint paths from $x$ to $y$ and there are at least $g(t,k)$ arc-disjoint paths from $y$ to $x$, then either $D$ contains the $t$-onion star as an immersion, or there is a family of $2k$ pairwise arc-disjoint paths with $k$ paths from $x$ to $y$ and $k$ paths from $y$ to~$x$.
\end{itemize}
\end{abstract}

\section{Introduction}\label{sec:intro}
A graph $H$ is an {\em{immersion}} of a graph $G$ if one can injectively map vertices of $H$ to vertices of $G$ and edges of $H$ to edge-disjoint paths in $G$ so that the image of every edge connects the images of its two endpoints. Thus, immersions are an embedding notion for (undirected) graphs that is based on edge-disjointness and edge cuts, as opposed to the notions of minors and of topological minors, which are based on vertex-disjointness and (vertex) separations.

The Graph Minor series of Robertson and Seymour brought fundamental understanding of the minor order in graphs. The key components of this understanding are the Grid \mbox{Minor} Theorem~\cite{robertson1986graph}, which connects the existence of large grid minors with a dual notion of tree-likeness---the treewidth---and the Structure Theorem~\cite{RobertsonS03a}, which describes the structure in graphs that exclude a fixed graph $H$ as a minor. The analogues of these two tools have been understood also in the setting of immersions. In~\cite{wollan2015structure}, Wollan introduced the parameter {\em{tree-cut width}} and proved the Wall Immersion Theorem, a duality theorem connecting the tree-cut width to the largest size of a wall that can be found in a graph as an immersion. In the same work, he also gave a structure theorem that describes graphs excluding a fixed graph $H$ as an immersion. Also, as a part of the Graph Minors series, Robertson and Seymour~\cite{robertson2010graph} proved that the immersion order is a well quasi-order on graphs. We invite the reader to~\cite{DeVosMMS13,MarxW14,dvovrak2016structure,giannopoulou2016linear,GiannopoulouKRT21,Liu21,Liu22,BozykDOP22} for other works on structural properties of graphs with forbidden immersions.

The notion of an immersion can be naturally generalized to directed graphs (digraphs) by considering mapping every arc to a directed path leading from the image of the tail to the image of the head. Recently, there has been significant interest in proving directed analogues of the advances of the theory of (undirected) minors. In particular, Kawarabayashi and Kreutzer~\cite{KawarabayashiK15} proved the directed variant of the Grid Minor Theorem, while a series of recent papers~\cite{GiannopoulouKKK20,GiannopoulouKKK22} is gradually working towards a directed analogue of the Structure Theorem. From this point of view, it is natural to ask whether a similar structure theory can be developed for directed immersions. 
It seems that so far, not much is known in this direction. We remark that a meaningful structure theory for {\em{tournaments}} with forbidden immersions was developed, see~\cite{ChudnovskyS11,ChudnovskyFS12,Raymond18,FominP19,dmtcs:9276}, but this restricted setting is very different from the setting of general digraphs. Also, there has been work on finding appropriate degree restrictions that force the existence of immersions of large complete digraphs or large transitive tournaments, see~\cite{DeVosMMS12,Lochet19}.

The aim of this paper is to make the first modest steps towards a structure theory for digraphs excluding a fixed digraph as an immersion. A more specific motivation is to provide opening moves towards a statement linking directed wall immersions with suitable width measures for digraphs.

\paragraph*{Our contribution.} Consider the {\em{$t$-onion star}}: the digraph depicted in \Cref{fig:onion-star-intro} below.
It is obtained from the star with $2t$ leaves by replacing every edge with a triple of arcs; for $t$ edges two of the arcs are oriented towards the center and one away from the center, and for the remaining $t$ edges we use the reverse orientation. Note the following.

\begin{figure}[htb]
\centering{
\includegraphics[page=1, scale=1.25]{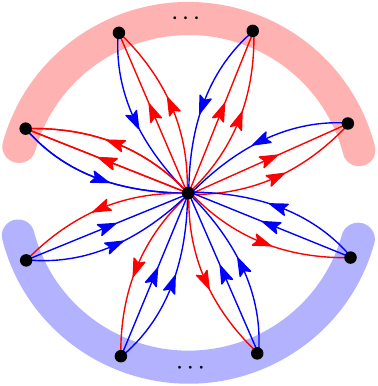}
}
\caption{The onion star.}
\label{fig:onion-star-intro}
\end{figure}

\begin{observation}\label{obs:rich-onion}
Suppose $D$ is a digraph on $t$ vertices where every vertex has outdegree at most~$2$ and indegree at most $1$, or vice versa. Then the $t$-onion star contains $D$ as an immersion.
\end{observation}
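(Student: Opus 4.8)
The plan is to exhibit an explicit immersion model by routing every arc of $D$ through the center of the $t$-onion star. Write $c$ for the center, and recall that the $t$-onion star has $t$ \emph{in-leaves}, each joined to $c$ by two arcs directed towards $c$ and one arc directed away from $c$, and $t$ \emph{out-leaves}, each joined to $c$ by two arcs directed away from $c$ and one arc directed towards $c$.

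First I would dispose of one of the two cases by symmetry. Reversing the orientation of every arc turns the $t$-onion star into an isomorphic digraph (in-leaves become out-leaves and vice versa, and there are $t$ of each), and reversing every arc of $D$ exchanges the two degree conditions in the statement. Hence it suffices to handle the case where every vertex of $D$ has out-degree at most $2$ and in-degree at most $1$.

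In that case, since $D$ has exactly $t$ vertices, I would fix an arbitrary bijection $\phi$ from $V(D)$ onto the set of $t$ in-leaves and use it as the (injective) vertex map. Each arc $(v,w)$ of $D$ is then routed along the directed path $\phi(v) \to c \to \phi(w)$. The choice of the two arcs on this path is dictated by the multiplicities at the leaves: at the in-leaf $\phi(v)$ there are two arcs towards $c$, and since $v$ has out-degree at most $2$ one may assign the arcs of $D$ leaving $v$ to distinct such copies; at the in-leaf $\phi(w)$ there is a single arc from $c$, and since $w$ has in-degree at most $1$ there is at most one arc of $D$ entering $w$, which is routed through that single arc.

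The only remaining thing to check—and the place where a little care is needed—is arc-disjointness of the routing paths. Two of these paths can share an arc only if that arc is incident to $c$; an arc $\phi(v) \to c$ is used precisely by the paths of the arcs of $D$ leaving $v$, and those were assigned distinct copies, while an arc $c \to \phi(w)$ is used only by the path of the (at most one) arc of $D$ entering $w$, since for $w' \neq w$ that arc is incident to $\phi(w') \neq \phi(w)$. As every arc of $D$ is sent to a directed path from the image of its tail to the image of its head, this is an immersion of $D$ in the $t$-onion star. I do not anticipate a genuine obstacle here: the argument is a counting/packing argument localized at the center, and the one subtle point is that the degree budget of $D$ (out-degree $\leq 2$, in-degree $\leq 1$) matches the $2$-to-$1$ split of arc multiplicities at an in-leaf, which is exactly why the vertices of $D$ are placed on the in-leaves rather than the out-leaves.
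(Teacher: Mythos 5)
Your proof is correct: the paper states this observation without proof, and your argument—placing all vertices of $D$ on the leaves with the matching $2$-to-$1$ arc multiplicities, routing every arc of $D$ through the center, and handling the second degree condition by reversing all arcs—is exactly the intended justification. The only hypotheses it silently uses (no loops in $D$, and in-degree at most $1$ ruling out parallel arcs competing for the single arc out of the center) are guaranteed by the paper's conventions and the degree assumption, so there is no gap.
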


Therefore, excluding the $t$-onion star as an immersion, for some fixed $t\in \N$, is a weaker condition than excluding any fixed digraph $D$ satisfying the premise of \cref{obs:rich-onion}; this in particular applies to any reasonable notion of a directed wall. The $t$-onion star is a directed analogue of the graph $S_{3,t}$ that was used by Wollan in~\cite{wollan2015structure} as an obstruction commonly found in various avenues of his proof of the (undirected) Wall Immersion Theorem.

For two vertices $x,y$ in a digraph $D$, let $\mu(x,y)$ be the maximum number of arc-disjoint paths from $x$ to $y$ that one can find in $D$. As an opening step of his proof, Wollan proved the following statement.

\begin{theorem}[Wollan, {\cite[Lemma~1]{wollan2015structure}}]\label{lem:Wollan-no-cut}
 Suppose a graph $G$ contains a set $X$ consisting of $t+1$ vertices such that for all distinct $x,y\in X$, we have $\mu(x,y)\geq t^2$. Then $G$ contains the complete graph $K_t$ as an immersion.
\end{theorem}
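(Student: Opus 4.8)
I would prove this by induction on $t$, peeling off the branch vertices of the $K_t$-immersion one by one; it is convenient to strengthen the statement so that the induction hypothesis also records that the branch vertices of the immersion lie in $X$. The case $t\le 1$ is trivial, so assume $t\ge 2$ and write $X=\{x_1,\dots,x_{t+1}\}$. The point of having $|X|=t+1$ rather than $t$ is precisely that it lets us set one vertex, say $x_{t+1}$, aside while still having a $t$-element set to feed into the induction.

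So first I would apply the induction hypothesis to $\{x_1,\dots,x_t\}$ --- a set of $(t-1)+1$ vertices, pairwise joined by $\mu\ge t^2\ge(t-1)^2$ edge-disjoint paths --- obtaining a $K_{t-1}$-immersion whose branch set is some $Y\subseteq\{x_1,\dots,x_t\}$ with $|Y|=t-1$; let $A$ be the union of the $\binom{t-1}{2}$ paths it uses. It then remains to attach $x_{t+1}$ as an additional branch vertex, that is, to find in $G-A$ a \emph{spider}: $t-1$ pairwise edge-disjoint paths from $x_{t+1}$ to $Y$, one ending at each vertex of $Y$. A Menger argument delivers this: adding a vertex $z$ adjacent to all of $Y$, any cut separating $x_{t+1}$ from $z$ either keeps $Y$ entirely on $x_{t+1}$'s side, and so contains the $t-1$ edges at $z$, or strands some $y\in Y$ on $z$'s side, and so contains a whole $(x_{t+1},y)$-cut of $G-A$. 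Hence it suffices that $\mu_{G-A}(x_{t+1},y)\ge t-1$ for each $y\in Y$; granting this, Menger yields the spider, it avoids $A$ by construction, and the spider together with the $K_{t-1}$-immersion is a $K_t$-immersion with branch set $Y\cup\{x_{t+1}\}\subseteq X$.

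Everything thus hinges on the single inequality $\mu_{G-A}(x_{t+1},y)\ge t-1$: removing the edge set $A$ must not cost too much of the connectivity --- originally at least $t^2$ --- between $x_{t+1}$ and $y$. Since $A$ is a disjoint union of $\binom{t-1}{2}$ paths, it would be enough that each of these paths meets any fixed minimum $(x_{t+1},y)$-cut in at most two edges, for then the drop is at most $(t-1)(t-2)$, leaving $\mu_{G-A}(x_{t+1},y)\ge t^2-(t-1)(t-2)\ge t-1$. This is exactly where the difficulty lies, and it is not automatic: a single path can cross a fixed edge cut arbitrarily many times, so a recklessly routed immersion could wipe out the connectivity between $x_{t+1}$ and $y$ altogether. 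The real work is therefore to produce the $K_{t-1}$-immersion with \emph{frugal} routes --- each meeting every relevant minimum cut only boundedly often --- whether by carrying this extra property through the induction, or by an uncrossing argument that locally reroutes any path crossing a minimum cut three or more times, exploiting the minimality of the cut together with edge-disjointness of the routes. I expect this rerouting, and the bookkeeping that converts "boundedly many crossings" into the clean numeric bound, to be the main obstacle; it is also where the generous quadratic quantity $t^2$ in the hypothesis gets spent, the rest being repeated applications of Menger's theorem.
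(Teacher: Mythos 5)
First, note that the paper does not prove this statement at all: it is quoted from Wollan \cite{wollan2015structure}, with the remark that it follows from ``a relatively easy application of flow-cut duality'' relying crucially on undirectedness. Measured against that intended argument, your proposal is both a different route and, more importantly, incomplete. Everything in your induction hinges on the inequality $\mu_{G-A}(x_{t+1},y)\ge t-1$ after deleting the edge set $A$ of the inductively obtained $K_{t-1}$-immersion, and you do not prove it: as you yourself observe, a single path may cross a fixed edge cut arbitrarily often, so deleting $\binom{t-1}{2}$ paths can in principle wipe out an $(x_{t+1},y)$-cut of size $t^2$. The missing ``frugal routing'' (each immersion path meets every relevant cut at most twice) is exactly the hard content, and neither your strengthened induction hypothesis (branch vertices in $X$) nor any stated selection rule delivers it; a shortest or otherwise minimal path can still cross a given cut many times, the cuts relevant at later stages are cuts of $G-A$ rather than of $G$, and any local uncrossing competes for edges with the other immersion paths, so it is not clear the rerouting can be done for all cuts simultaneously. (A smaller imprecision: for the Menger count you need \emph{every} $(x_{t+1},y)$-cut of $G-A$, not just a minimum one, to have at least $t-1$ edges; here this is harmless since all such cuts of $G$ have size at least $t^2$, but it should be said.) As it stands, the proof has a genuine gap at its central step.

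The gap can be avoided entirely, with no induction, by using the spare vertex $x_{t+1}$ as a hub --- this is the one-shot flow-cut argument the paper alludes to. Add a new vertex $s$ joined to each of $x_1,\dots,x_t$ by $t-1$ parallel edges. Any edge cut separating $s$ from $x_{t+1}$ either contains all $t(t-1)$ new edges, or leaves some $x_i$ on the side of $s$ and then contains an $(x_i,x_{t+1})$-cut of $G$, of size at least $t^2\ge t(t-1)$. By Menger's theorem there are $t(t-1)$ pairwise edge-disjoint $s$--$x_{t+1}$ paths, i.e.\ exactly $t-1$ paths from each $x_i$ to $x_{t+1}$, all pairwise edge-disjoint. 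For each pair $\{i,j\}$ concatenate one unused path from $x_i$ with the \emph{reversal} of one unused path from $x_j$; this yields $\binom{t}{2}$ pairwise edge-disjoint $x_i$--$x_j$ walks, hence an immersion of $K_t$ with branch vertices $x_1,\dots,x_t$. This shows where the hypothesis is really spent (only connectivity to the hub, and only $t(t-1)\le t^2$ of it), why $|X|=t+1$ rather than $t$ is assumed, and why the argument is genuinely undirected: the reversal step is precisely what fails for digraphs, which is why the present paper has to work much harder via \cref{thm:onion-harvesting}.
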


The proof is a relatively easy application of flow-cut duality, which nonetheless crucially relies on the undirectedness of the graph. The main result of this paper is the following weak directed analogue of \cref{lem:Wollan-no-cut}.

\begin{theorem}\label{thm:no-cut}
There exists a function $f\colon \N\to \N$ such that the following holds.
Suppose a digraph $D$ contains a set $X$ consisting of $2t+1$ vertices such that for all distinct $x,y\in X$, we have $\mu(x,y)>f(t)$. Then $D$ contains the $t$-onion star as an immersion.
\end{theorem}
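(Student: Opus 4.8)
The plan is to use throughout Menger's theorem for arc‑disjoint paths in digraphs, so that $\mu(x,y)$ equals the minimum number of arcs meeting every directed $x\to y$ path, and hence $\mu_{D-F}(x,y)\ge\mu(x,y)-|F|$ for every arc set $F$; under the hypothesis this says that deleting any at most $f(t)$ arcs keeps every ordered pair of vertices of $X$ joined by a directed path. I would fix an arbitrary vertex $c\in X$ to be the centre of the onion star and let $u_1,\dots,u_{2t}$ be the remaining $2t$ vertices of $X$, to be used as leaves (so $|X|=2t+1$ is exactly one centre plus $2t$ leaves, $t$ of each type). It then suffices to build $8t$ pairwise arc‑disjoint directed paths, namely two paths from $u_i$ to $c$ and two paths from $c$ to $u_i$ for every $i$: discarding, at $t$ of the leaves, one of the $c\to u_i$ paths and, at the remaining $t$ leaves, one of the $u_i\to c$ paths, leaves $6t$ pairwise arc‑disjoint paths forming a model of the $t$-onion star.

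Producing the ``outgoing'' paths alone is easy: since $\mu(c,u_i)>f(t)\ge 4t$, no set of at most $4t$ arcs separates $c$ from any one $u_i$, so a single max‑flow computation from $c$ to a super‑sink placed over $\{u_1,\dots,u_{2t}\}$ yields $4t$ arc‑disjoint paths leaving $c$, two ending at each leaf; symmetrically for the $4t$ ``incoming'' paths. The real difficulty is that the outgoing and incoming paths must also be arc‑disjoint from one another: we are routing two commodities at once, one leaving $c$ and one entering $c$, and for directed graphs such a routing need not exist even between a single pair of vertices with huge $\mu$ in both directions — and, as the second item of the abstract indicates, the obstruction to it is precisely a $t$-onion star. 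Thus the heart of the matter is to show that, with $2t+1$ vertices pairwise joined by more than $f(t)$ arc‑disjoint paths in each direction, either the two‑commodity routing above can be carried out (and we are done), or one of the obstructions to it can be converted into an onion‑star model.

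Concretely, I would route the outgoing paths first and then attempt to route the incoming paths in what remains; if this fails, flow–cut duality produces a set of fewer than $4t$ arcs that, together with the outgoing paths, separates some leaves from $c$, and since every leaf lies in $X$ with $\mu(u_i,c)>f(t)$ this forces a large overlap between where the outgoing paths travel and where incoming paths are obliged to travel. The plan is then to exploit this overlap, together with the $2t$ leaves available as potential petals and the high connectivity among all of $X$, to read off a model of the $t$-onion star — re‑routing and iterating as long as the two‑commodity attempt keeps failing. Making this dichotomy quantitative is the step I expect to be the main obstacle, and it is what forces $f(t)$ to be (presumably) rapidly growing; the remaining ingredients — Menger's theorem, the reduction of the first paragraph, and extracting the onion‑star model from the $8t$ paths — are routine.
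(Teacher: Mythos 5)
Your reduction is sound and matches the paper's target: it suffices to produce, for a chosen centre and the $2t$ other vertices of $X$, two paths in each direction per leaf, all $8t$ (in the paper, $6t$ after discarding) pairwise arc-disjoint; and your super-sink/Menger argument for getting many arc-disjoint paths leaving (or entering) the centre, two per leaf, is essentially the paper's auxiliary-vertex construction. You also correctly identify where the difficulty lies: making the outgoing and incoming families arc-disjoint from \emph{each other} is a two-commodity routing problem that genuinely fails in digraphs, and the claimed dichotomy is that the only obstruction is an onion-star immersion.

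The genuine gap is that this dichotomy --- which is \cref{thm:onion-harvesting} (in the form of \cref{lem:onion-harvesting}) and constitutes essentially the entire technical content of the paper --- is exactly the step you defer, and your sketch of how to obtain it would not go through as stated. Your plan (route the outgoing paths, apply flow--cut duality to a failed attempt at the incoming paths, and ``exploit the overlap'' to read off an onion star, re-routing and iterating) gives you only the qualitative fact that incoming paths are forced to share many arcs with outgoing paths; it gives no mechanism for turning such shared arcs into $2t$ arc-disjoint onions with a common root and distinct second roots. The paper's route is quite different and substantially heavier: it takes \emph{maximum} families $\cP$ of $x\to y$ paths and $\cQ$ of $y\to x$ paths, applies Thomason's bipartite Ramsey theorem (\cref{thm:thomasen}) to their intersection graph to conclude that either large subfamilies are pairwise arc-disjoint (the good two-commodity outcome) or there is a large \emph{well-crossing} pair, and then the Onion Harvesting Lemma (\cref{lem:wc}, proved via the safe/dangerous-crossing analysis, K\H{o}v\'ari--S\'os--Tur\'an, a randomized choice of $P_1,P_2$, and iterated extraction of onions via \cref{lem:single-out}) converts a well-crossing pair into a $t$-onion-star. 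Moreover, even granting the dichotomy, the final assembly is not a single application: one must uncross $\cP_i$ against $\cQ_j$ for all $4t^2$ ordered leaf pairs, shrinking the families at each step, which is why $f(t)$ involves an iterated composition $g_t^{\comp{4t^2}}(2)$; your ``re-routing and iterating'' gestures at this but gives no quantitative scheme. In short, the proposal sets up the right frame but omits the key lemma and its proof, so it does not yet constitute a proof of the theorem.
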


Note that in \cref{thm:no-cut} we obtain an obstruction that is weaker than that of \cref{lem:Wollan-no-cut}: the $t$-onion star instead of a complete digraph. 

The proof of \cref{thm:no-cut} applies the same basic flow-cut strategy as the proof of \cref{lem:Wollan-no-cut} due to Wollan, but there is a major issue. For any $x,y\in X$, the premise of \cref{thm:no-cut} provides the existence of a large family $\Pp_{x\to y}$ of arc-disjoint paths from $x$ to $y$, and of a large family $\Pp_{y\to x}$ of arc-disjoint paths from $y$ to $x$.
However,
in principle every path of $\Pp_{x\to y}$ could intersect every path of $\Pp_{y\to x}$, while for the abovementioned strategy to work, we need a family containing many arc-disjoint paths from $x$ to $y$ and many from $y$ to $x$ that are also arc-disjoint between each other. In general digraphs, there is no hope for exposing such a family: in~\Cref{fig:counterex} we give an example where $\mu(x,y)$ and $\mu(y,x)$ can be arbitrarily large, but one cannot find even two arc-disjoint paths: one from $x$ to $y$ and one from $y$ to $x$. However, we prove that the desired statement holds under the assumption of excluding an onion star.

\begin{figure}[H]
\centering{
\includegraphics[page=1, scale=1.25]{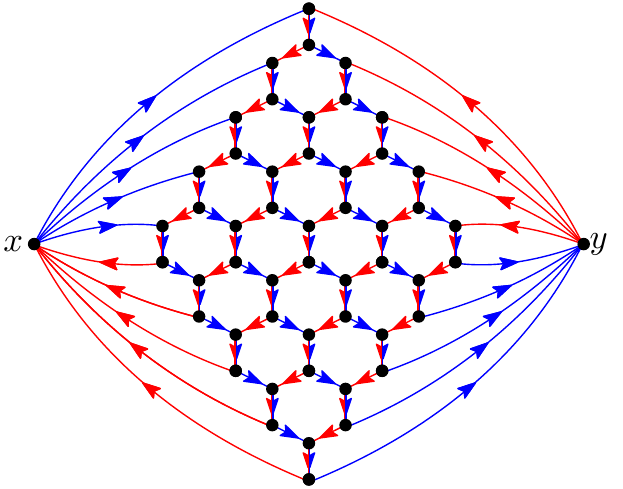}
}
\caption{An example of a digraph that contains a large family of arc-disjoint paths from $x$ to $y$ (in blue), a large family of arc-disjoint paths from $y$ to $x$ (in red), but no two arc disjoint paths such that one goes from $x$ to $y$, and the other one from $y$ to $x$.}
\label{fig:counterex}
\end{figure}

\begin{theorem}\label{thm:onion-harvesting}
There exists a function $g\colon \N\times \N\to \N$ such that the following holds.
 Suppose $D$ is a digraph and $x,y$ are two distinct vertices in $D$ such that $\mu(x,y)>g(t,k)$ and $\mu(y,x)>g(t,k)$, for some $t,k\in \N$. Then at least one of the following holds:
 \begin{itemize}[nosep]
  \item $D$ contains the $t$-onion star as an immersion;
  \item in $D$ there is a family of~$2k$ arc-disjoint paths consisting of $k$ paths from $x$ to $y$ and $k$ paths from $y$ to $x$.
 \end{itemize}
\end{theorem}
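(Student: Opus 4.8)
The plan is to fix maximum families of arc-disjoint paths in the two directions and argue that either these families can be pruned to the desired $2k$-family, or their mutual conflicts are abundant enough to be assembled into a $t$-onion star immersion. This follows the flow--cut spirit of Wollan's proof of \cref{lem:Wollan-no-cut}, but has to compensate for the fact that the $x$-to-$y$ paths and the $y$-to-$x$ paths need not be mutually arc-disjoint (\cref{fig:counterex}).

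Concretely, let $\cP$ be a maximum family of arc-disjoint $x$-to-$y$ paths and $\cQ$ a maximum family of arc-disjoint $y$-to-$x$ paths, so $|\cP|,|\cQ|>g(t,k)$. Call an arc \emph{shared} if it lies on a path of $\cP$ and on a path of $\cQ$; since $\cP$ is arc-disjoint and $\cQ$ is arc-disjoint, each shared arc determines a unique $P\in\cP$ and a unique $Q\in\cQ$ containing it. First I would dispose of the easy case: if at least $k$ paths of $\cP$ avoid all shared arcs and at least $k$ paths of $\cQ$ avoid all shared arcs, then those $2k$ paths already form the desired family, because a shared-arc-free path of one family is arc-disjoint from \emph{every} path of the other; the same works if only one family has $k$ shared-arc-free paths, by adjoining any $k$ paths of the other family. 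So from now on I may assume that all but fewer than $k$ paths in each family use a shared arc; in particular there are more than $g(t,k)-k$ shared arcs, a quantity I will make arbitrarily large through the choice of $g$.

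Next I would exploit the \emph{crossings} created by shared arcs. If $e=(a_e,b_e)$ is shared, lying on $P_e\in\cP$ and $Q_e\in\cQ$, then $P_e[x\to a_e]$ followed by $e$ is an $x$-to-$b_e$ path, the suffix $Q_e[b_e\to x]$ is a $b_e$-to-$x$ path, and symmetrically with $y$ in place of $x$; so every head of a shared arc both reaches and is reached from each of $x$ and $y$ along pieces of $\cP$- and $\cQ$-paths. From the enormous supply of shared arcs I would extract a \emph{well-organised} sub-family of crossings: one shared arc per $\cP$-path (making the $\cP$-hosts distinct), then a thinning making the $\cQ$-hosts distinct, and finally a sunflower/Ramsey-type cleaning that kills the residual conflicts between the $\cP$-host of one crossing and the $\cQ$-host of another---affordable because the number of crossings dwarfs $t$. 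Such a family already gives a ``digon star'': take $x$ as the centre, the crossing heads as $2t$ leaves, a $\cP$-prefix-plus-$e$ as an arc from the centre to a leaf and a $\cQ$-suffix as an arc from a leaf to the centre.

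The hard part will be precisely the feature that makes \cref{thm:onion-harvesting} nontrivial: the onion star demands \emph{two} arc-disjoint paths on one side of each leaf (two copies of ``leaf $\to$ centre'' for $t$ leaves, two copies of ``centre $\to$ leaf'' for the other $t$), whereas a single crossing delivers only one path into $b_e$ from $x$ and one out of $b_e$ to $x$---and the two obvious ``into $b_e$'' paths both run through the arc $e$. So the crux is to obtain a second path per leaf while keeping all $6t$ paths globally arc-disjoint; two crossings sharing a common head $b_e$ would supply this (and then one can freely designate that leaf as either type, using $\cQ$-suffixes for the ``towards the centre'' copies and $\cP$-prefixes for the ``away from the centre'' copy, or vice versa). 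I expect this step to split according to how the shared-arc heads are distributed: a pigeonhole step forcing many crossings onto a common head when the heads are not too dispersed, and, in the dispersed case, a rerouting argument that uses the left-to-right order of consecutive shared arcs along a single long $\cP$-path (respectively $\cQ$-path) to manufacture an alternative path into each crossing vertex. This is the point where careful flow--cut bookkeeping, rather than plain counting, is required and where the fast growth of $g$ is consumed.
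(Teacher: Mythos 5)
There is a genuine gap, in two places. First, your dichotomy is not the right one. Your ``easy case'' requires $k$ paths of one family to avoid \emph{all} shared arcs, and when that fails you conclude only that there are many shared arcs and head for an onion-star construction. But ``many shared arcs'' is far too weak: the intersection graph of $(\cP,\cQ)$ could, for instance, be a perfect matching ($P_i$ meets $Q_i$ and nothing else). Then no path of either family is free of shared arcs, so your easy case fails; yet the second outcome of the theorem plainly holds (take $P_1,\ldots,P_k$ and $Q_{k+1},\ldots,Q_{2k}$), while your hard case cannot succeed, since a matching-like crossing pattern gives only one path into and one path out of each crossing vertex and need not contain any onion at all. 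The correct dichotomy is a bipartite Ramsey statement applied to the intersection graph of $(\cP,\cQ)$ (the paper uses Thomason's bound, \cref{thm:thomasen}): either there are subfamilies $\cP'\subseteq\cP$, $\cQ'\subseteq\cQ$ of size $k$ that are anti-complete in the intersection graph---these need not avoid shared arcs globally, only arcs shared with the \emph{selected} opposite subfamily, which already yields the $2k$ arc-disjoint paths---or there is a \emph{well-crossing pair}: subfamilies of size $F(t)$ in which every $P$ meets every $Q$. Only in the latter, genuinely dense situation does one attempt to build the onion star.

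Second, even granted such a dense crossing structure, the step you correctly identify as the crux---producing two arc-disjoint centre-to-leaf (or leaf-to-centre) paths per leaf while keeping all paths of the whole star globally arc-disjoint---is exactly where the paper's main technical work lies, and your sketch does not supply an argument for it. The pigeonhole branch (many crossings with a common head) is not forced by anything, and the ``rerouting along consecutive shared arcs'' is precisely what needs a proof: in a well-crossing pair every $P$ meets every $Q$, so naive prefix/suffix choices collide everywhere. The paper handles this in \cref{lem:single-out} by classifying crossings as safe or dangerous relative to the orders $<_P$, $<_Q$, trimming paths, applying the K\H{o}v\'ari--S\'os--Tur\'an theorem (\cref{thm:kst}) twice together with an averaging argument over a random choice of two host paths, and---crucially---extracting each onion together with a smaller well-crossing pair that is arc-disjoint from it, so that the construction can be iterated $2t$ times with source $x$ and then $2t$ times with sink $x$ in the reversed digraph (\cref{lem:wc}). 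Without an argument of this kind, your outline stops exactly at the point where the theorem becomes nontrivial.
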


\cref{thm:onion-harvesting} is the main technical component in the proof of \cref{thm:no-cut}, and its proof spans most of this paper. We believe it is of independent interest, as in essence the statement shows  the following: under the assumption of excluding a fixed onion star, directed flows can be to some extent uncrossed so that we obtain basic properties observable in undirected flows.

\section{Preliminaries}\label{sec:prelim}
For a positive integer $n$, by $[n]$ we denote the set $\{1,\ldots,n\}$, and by $\mathbb{N}$ the set of positive integers.
If $f$ is a function whose domain and codomain are equal, and $t\geq 0$ is an integer, 
by $f^\comp{t}(x)$ we mean \[\underbrace{f(f(\ldots f}_{\text{$t$ times}}(x)\ldots)).\]

\paragraph{Digraphs and cuts.} 
For a directed graph $D$, by $V(D)$ and $A(D)$ we denote the vertex and the arc set of $D$, respectively.
We allow the existence of parallel arcs (multiple arcs with the same pair of endpoints), but we do not allow loops (arcs with both endpoints at the same vertex). 
For $xy \in A(D)$, we call $x$ and $y$ the \emph{tail} and the \emph{head} of $xy$, respectively. For an arc $a\in A(D)$ we denote by $\tail(a)$ and $\head(a)$ its tail and head, respectively.
For a set $X \subseteq V(D)$, by $D[X]$ we denote the subgraph induced by $X$. Moreover we define 
\begin{align*}
\delta^+(X)&\coloneqq \{a\in A(D) : \tail(a)\in X\text{ and }\head(a)\notin X\},\\
\delta^-(X)&\coloneqq \{a\in A(D) : \tail(a)\notin X\text{ and }\head(a)\in X\}.
\end{align*}
If $X=\{x\}$, we omit internal brackets and write $\delta^+(x)$ and $\delta^-(x)$, respectively.

A \emph{path} in a digraph $D$ is a sequence $(a_1, a_2, \ldots, a_k)$ of pairwise distinct arcs of $D$ with the property that $\head(a_i)=\tail(a_{i+1})$ for every $i\in[k-1]$.
Given a path $P=(a_1,a_2,\ldots,a_k)$ we put $\fst(P)\coloneqq a_1$, $\lst(P)\coloneqq a_k$, and $A(P)\coloneqq\{a_1,a_2,\ldots,a_k\}$. Path $P$ is \emph{simple} if $\head(a_i)=\tail(a_j)$ if and only if $j=i+1$, i.e., if the tail of the first arc and the heads of all arcs form $k+1$ distinct vertices. Let $<_P$ be the natural linear order of $A(P)$ defined by $a_i<_P a_j$ if and only if $i<j$. We define \emph{trimmed paths}:
\begin{align*}
P(\to a_i)&\coloneqq (a_1,a_2,\ldots,a_{i-1}),\\
P(\to a_i]&\coloneqq (a_1,a_2,\ldots,a_i),\\
P(a_i\to)&\coloneqq (a_{i+1},a_{i+2},\ldots,a_k),\\
P[a_i\to)&\coloneqq (a_i, a_{i+1},\ldots, a_k),\\
P(a_i,a_j)&\coloneqq (a_{i+1},a_{i+2},\ldots,a_{j-1}).
\end{align*}
If $s=\tail(a_1)$ and $t=\head(a_k)$, then $P$ is called an \emph{$(s,t)$-path}. If $\cP$ is a family of paths and $Q$ is a path, then we put $A(\cP)\coloneqq \bigcup_{P\in\cP} A(P)$ and $\cP(Q)\coloneqq\{P\in \cP\mid A(P)\cap A(Q)\neq\emptyset\}$.
For two arc-disjoint paths $P=(p_1,\ldots,p_n)$ and $Q=(q_1,\ldots,q_m)$ such that $\head(p_n)=\tail(q_1)$, we denote by $PQ$ the \emph{concatenation} of $P$ and $Q$, i.e., the path $(p_1,\ldots,p_n,q_1,\ldots,q_m)$.

A \emph{cut} in $D$ is a partition $(A,B)$ of $V(D)$. 
An \emph{$(a,b)$-cut} in $D$, where $a,b\in V(D)$, is a cut $(A,B)$ with $a\in A$ and $b\in B$.
The \emph{size} of the cut $(A,B)$ is $|\delta^+(A)|=|\delta^-(B)|$. 

The classical theorem of Menger describes the relation between the size of $(a,b)$-cuts and the number of arc-disjoint paths connecting $a$ and $b$. 

\begin{theorem}[Menger~\cite{menger1927allgemeinen}]\label{thm:menger}
Let $D$ be a digraph and let $a,b \in V(D)$.
The maximum number of arc-disjoint $(a,b)$-paths equals the minimum size of an $(a,b)$-cut.
\end{theorem}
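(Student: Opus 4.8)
The statement is the classical theorem of Menger in its arc-disjoint directed form, so the plan is to run the standard max-flow argument. Write $\mu = \mu(a,b)$ for the maximum number of arc-disjoint $(a,b)$-paths and $\lambda$ for the minimum size of an $(a,b)$-cut; I would establish $\mu \le \lambda$ and $\lambda \le \mu$ separately.

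\emph{The inequality $\mu \le \lambda$.} First I would fix a family $\cP$ of $\mu$ pairwise arc-disjoint $(a,b)$-paths and an arbitrary $(a,b)$-cut $(A,B)$. Since each $P \in \cP$ starts at $a \in A$ and ends at $b \in B$, following $P$ there is a first arc leaving $A$, and this arc lies in $\delta^+(A)$. Arc-disjointness makes the $|\cP|$ arcs obtained this way pairwise distinct, so $|\delta^+(A)| \ge |\cP| = \mu$; minimising over all $(a,b)$-cuts gives $\mu \le \lambda$.

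\emph{The inequality $\lambda \le \mu$.} This is the substantive direction, and I would argue it by the residual-graph (Ford--Fulkerson) method. Take $\cP$ to be a \emph{maximum} family of arc-disjoint $(a,b)$-paths and let $\phi\colon A(D)\to\{0,1\}$ be its indicator, i.e. $\phi(e)=1$ iff $e$ lies on some path of $\cP$; a short degree-counting check shows $\phi$ is an integral $(a,b)$-flow of value $|\cP|=\mu$ obeying unit capacities. Form the residual digraph $D_\phi$ on $V(D)$ with a forward copy of each arc $e$ with $\phi(e)=0$ and a reversed copy of each arc $e$ with $\phi(e)=1$, and let $R$ be the set of vertices reachable from $a$ in $D_\phi$. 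If $b\in R$, then a residual $(a,b)$-walk lets us augment $\phi$ to an integral unit-capacity $(a,b)$-flow of value $\mu+1$; the flow-decomposition theorem writes this flow as arc-disjoint $(a,b)$-paths together with arc-disjoint cycles, and discarding the cycles yields $\mu+1$ arc-disjoint $(a,b)$-paths, contradicting maximality of $\cP$. Hence $b\notin R$, and with $B\coloneqq V(D)\setminus R$ the pair $(R,B)$ is an $(a,b)$-cut. Every arc of $D$ in $\delta^+(R)$ must satisfy $\phi=1$, as otherwise its forward copy would survive in $D_\phi$ and enlarge $R$; every arc of $D$ in $\delta^-(R)$ must satisfy $\phi=0$, as otherwise its reversed copy would survive in $D_\phi$ and enlarge $R$. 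Evaluating the value of $\phi$ across the cut $(R,B)$ then gives $\mu = \sum_{e\in\delta^+(R)}\phi(e) - \sum_{e\in\delta^-(R)}\phi(e) = |\delta^+(R)|$, so $\lambda \le |\delta^+(R)| = \mu$. Combining the two inequalities yields $\mu = \lambda$.

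\emph{Main obstacle.} There is no genuine difficulty here, as this is a textbook result; the only points needing care are the two bookkeeping claims in the hard direction, namely that augmenting $\phi$ along a residual $(a,b)$-walk really produces one more arc-disjoint $(a,b)$-path (flow decomposition of integral unit-capacity flows), and that the unreachable set $R$ induces a cut of size exactly $|\cP|$. If one prefers to avoid flow terminology altogether, the same result can be obtained by induction on $|A(D)|$: choose an arc $e\in\delta^+(a)$ (if $\delta^+(a)=\emptyset$ then $\mu=\lambda=0$), and split into the case where some minimum $(a,b)$-cut contains $e$ and the case where it does not, routing a path through $e$ accordingly and applying the induction hypothesis to a slightly smaller digraph; the residual-graph presentation above is, however, shorter and cleaner.
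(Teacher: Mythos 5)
Your argument is correct: the easy inequality via the first arc of each path leaving $A$, and the hard inequality via the residual digraph, augmentation, and the reachability cut $R$ are all sound, including the bookkeeping that arcs of $\delta^+(R)$ carry flow $1$ and arcs of $\delta^-(R)$ carry flow $0$ (and the argument is unaffected by the parallel arcs the paper permits). Note, however, that the paper does not prove this statement at all --- it is the classical arc-disjoint directed form of Menger's theorem, cited to Menger's 1927 paper --- so there is no in-paper proof to compare against; your residual-graph (Ford--Fulkerson) presentation is the standard textbook proof and would be a perfectly acceptable way to make the citation self-contained.
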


If $\cP$ and $\cQ$ are two families of paths in a digraph $D$, then by \emph{intersection graph} of the pair \pqpair{} we mean the undirected bipartite graph with bipartition classes $\cP$, $\cQ$, and an edge between $P \in \cP$ and $Q \in \cQ$ if $P$ and $Q$ have at least one common arc.

\paragraph{Immersions.}
We say that a digraph $D$ admits a digraph $H$ as an \emph{immersion} if there exists an
\emph{immersion model} of $H$ in $D$, that is, a mapping $\pi$ defined on the vertices and arcs of $H$ as follows:
\begin{itemize}[nosep]
\item $\pi$ maps vertices of $H$ to pairwise different vertices of $D$;
\item $\pi$ maps each arc $xy \in A(H)$ to a path in $D$ with tail $\pi(x)$ and head $\pi(y)$;
\item paths in $\{\pi(a) : a \in A(H)\}$ are pairwise arc-disjoint.
\end{itemize}
It is clear that $D$ admits $H$ as an immersion if and only if there exists an immersion model of $H$ in $D$ in which all arcs are mapped to simple paths.

Let $t\geq 1$.
An \emph{onion}\footnote{We note that onions are directed variants of pumpkins, which were studied e.g.~in~\cite{joret2014hitting}.} is the digraph $\smash{\overrightarrow{O}}$ which consists of two vertices $x$ and $y$, two arcs $yx$ and one arc $xy$. 
The vertices $x$ and $y$ are called \emph{roots} of the onion, with $x$ being also called the \emph{sink} of the onion, and $y$ being the \emph{source} of the onion. If $\pi$ is an immersion model of $\smash{\overrightarrow{O}}$ in $D$, then the vertices $\pi(x)$, $\pi(y)$ of $D$ will be called roots (accordingly sink and source) of this immersion.
A \emph{\os{t}} is the digraph $\smash{\overrightarrow{S_{t}}}$ with the set of vertices $\{x,y_1,\ldots,y_t,z_1,\ldots,z_{t}\}$ and set of arcs $\bigcup_{i=1}^t A_i$, where each $A_i$ consists of single arcs $y_ix$ and $xz_{i}$, and double arcs $xy_i$ and $z_{i}x$ (see \Cref{fig:onion}).

\begin{figure}[h]
\centering
\includegraphics[page=2,scale=1.25]{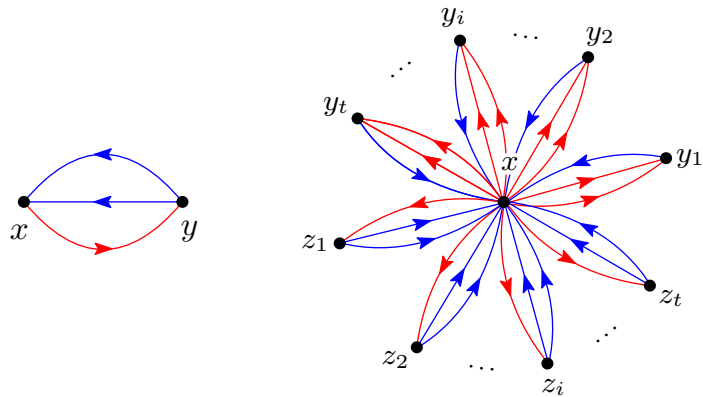}
\caption{An onion (left) with sink $x$ and source $y$ and a \os{t} (right).
Blue and red arcs are incoming and outgoing arcs of $x$, respectively.}
\label{fig:onion}
\end{figure}

Observe that the graph $\smash{\overrightarrow{S_{t}}[\{y_1,\ldots,y_t,z_1,\ldots,z_t\}]}$ is arcless and for every $i=1,2,\ldots,t$, the graphs $\smash{\overrightarrow{S_{t}}[\{x,y_i\}]}$ and $\smash{\overrightarrow{S_{t}}[\{x,z_i\}]}$ are onions. Moreover, $x$ is the source of $\smash{\overrightarrow{S_{t}}[\{x,y_i\}]}$ and the sink of $\overrightarrow{S_{t}}[\{x,z_i\}]$. 

\paragraph{Graphs.} 
If $G$ is a simple undirected graph, we denote by $V(G)$ and $E(G)$ its sets of vertices and edges, respectively.
For a vertex $v \in V(G)$, let $\deg_G(v)$ be the number of neighbors of $v$ in $G$.
We say that two disjoint sets $A, B \subseteq V(G)$ are \emph{complete} to each other if every $a \in A$ is adjacent to every $b \in B$.
Similarly, $A$ and $B$ are \emph{anti-complete} to each other if every $a \in A$ is nonadjacent to every $b \in B$.

An undirected graph $G$ is \emph{bipartite} if there exists a partition of $V(G)$ into sets $X$ and $Y$ such that each edge of $G$ has one endpoint in $X$ and another in $Y$. 
A bipartite graph is \emph{balanced} if there exists a choice of the bipartition classes of $G$ such that the numbers of vertices in the two classes are equal.
By $K_{n,n}$ we denote the complete balanced graph on $2n$ vertices.
Let $n$ be a positive integer. 
By $b(n)$ we denote the minimum integer such that in every balanced graph on $2b(n)$ vertices, with bipartition classes $X$ and $Y$, either there exists an induced subgraph isomorphic to $K_{n,n}$, or there exist two sets $A \subseteq X, B\subseteq Y$, each of size $n$, that are anti-complete to each other. 
The existence of $b(n)$ for each $n$ follows from \cite{DBLP:journals/ejc/Thomason82}.
\begin{theorem}[Thomason~\cite{DBLP:journals/ejc/Thomason82}]\label{thm:thomasen}
For every $n \geq 1$ we have $b(n)\leq 2^n(n-1) + 1.$
\end{theorem}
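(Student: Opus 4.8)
The plan is to recognize this as a bipartite Ramsey-type statement and prove it by a K\H{o}v\'ari--S\'os--Tur\'an (Zarankiewicz) double-counting argument.

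\emph{Recasting.} Let $G$ be a balanced bipartite graph with bipartition classes $X,Y$ of size $N:=2^n(n-1)+1$. Since $G$ has no edges inside $X$ or inside $Y$, an induced copy of $K_{n,n}$ in $G$ is precisely a pair $A\subseteq X$, $B\subseteq Y$ with $|A|=|B|=n$ and $A$ complete to $B$; and a pair of anti-complete $n$-sets, one in each part, is precisely such a pair that is complete to each other in the bipartite complement $\bar G$. So it suffices to prove: every bipartite graph $H$ with both parts of size $N$ and more than $N^2/2$ edges contains a $K_{n,n}$ with $n$ vertices in each part. This implies the statement, because $|E(G)|+|E(\bar G)|=N^2$ and $N$ is odd (as $2^n(n-1)$ is even), so exactly one of $G,\bar G$ has more than $N^2/2$ edges, and we apply the claim to it.

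\emph{Double counting.} Suppose such an $H$ (parts $X,Y$ of size $N$, more than $N^2/2$ edges) has no $K_{n,n}$, and count the pairs $(S,y)$ with $S\in\binom{X}{n}$ and $y\in Y$ adjacent in $H$ to every vertex of $S$. Summing over $S$: each $n$-subset of $X$ has at most $n-1$ common neighbours in $Y$, so the count is at most $(n-1)\binom{N}{n}$. Summing over $y$: the count equals $\sum_{y\in Y}\binom{\deg_H(y)}{n}\ge N\binom{\bar d}{n}$ by convexity, where $\bar d:=|E(H)|/N$; moreover $|E(H)|$ is an integer and $N$ is odd, so in fact $\bar d\ge\lceil N^2/2\rceil/N=\tfrac N2+\tfrac1{2N}$. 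Therefore $N\binom{\bar d}{n}\le(n-1)\binom{N}{n}$, i.e.
\[
\bar d(\bar d-1)\cdots(\bar d-n+1)\ \le\ (n-1)(N-1)(N-2)\cdots(N-n+1).
\]
The left-hand side is increasing in $\bar d$ on $[n-1,\infty)$ and $\bar d>N/2\ge n-1$, so we obtain a contradiction as soon as we verify that substituting $\bar d=\tfrac N2+\tfrac1{2N}$ makes the left side strictly exceed the right side, for $N=2^n(n-1)+1$.

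\emph{Where the work is.} Everything now hinges on that last numeric inequality, and it is genuinely tight: the bound lies essentially at the threshold $N\approx 2^n(n-1)$, so the routine relaxations — replacing the falling factorial by $(\bar d-n+1)^n$, or dropping the $\tfrac1{2N}$ correction and using only $\bar d\ge N/2$ — lose precisely the margin one needs. The careful route is to keep the falling-factorial products on both sides, use the integrality of $|E(H)|$ and $|E(\bar H)|$, and dispose of the first few values of $n$ by hand; this delicate accounting is exactly what is carried out in the cited work of Thomason~\cite{DBLP:journals/ejc/Thomason82}. (For contrast, the naive greedy alternative — scanning the vertices of $X$ and at each step shrinking $Y$ to the majority colour class within the current live set — may be forced to halve the live set up to $2n-1$ times before producing $n$ monochromatic vertices on the $X$-side, and so only yields a bound of order $2^{2n}$; the all-at-once double count is what buys the $2^n$ dependence.)
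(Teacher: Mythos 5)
You have not actually proved the statement; you have only set it up. The recasting is fine: an induced $K_{n,n}$ (resp.\ an anti-complete pair of $n$-sets) in the balanced bipartite graph $G$ is the same as a complete pair $A\subseteq X$, $B\subseteq Y$ in $G$ (resp.\ in its bipartite complement), and one of the two graphs has more than $N^2/2$ edges. But the entire content of the theorem is the quantitative claim that a bipartite graph with parts of size $N=2^n(n-1)+1$ and more than $N^2/2$ edges contains $K_{n,n}$, and at exactly that point you stop and write that the needed verification ``is exactly what is carried out in the cited work of Thomason.'' As a proof of the statement this is circular (note also that the paper itself does not prove the theorem either --- it quotes it from Thomason --- so there is no in-paper argument your sketch could be matching).

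Worse, the inequality you defer is false for $n\ge 3$, so no careful accounting can finish your route. The double count gives a contradiction only if $N\binom{\bar d}{n} > (n-1)\binom{N}{n}$, which for $\bar d=N/2$ amounts to $\frac{N}{2^{n}}\prod_{i=1}^{n-1}\frac{N-2i}{N-i} > n-1$. With $N=2^n(n-1)+1$ the factor $N/2^n$ equals $(n-1)+2^{-n}$, while the product is at most about $1-\frac{n(n-1)}{2N}$, so the left-hand side is roughly $(n-1)+2^{-n}\bigl(1-\tfrac{n(n-1)}{2}\bigr)$, which is strictly below $n-1$ once $n\ge 3$; the count only becomes contradictory when $N$ exceeds roughly $2^n(n-1)+\binom{n}{2}$. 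Your integrality correction $\bar d\ge \tfrac N2+\tfrac1{2N}$ improves $\binom{\bar d}{n}$ by a multiplicative factor of order $1+n/N^2$, which is negligible against a shortfall of relative order $n^2/N$. So density plus convexity (even with the falling factorials kept exactly) can only deliver $b(n)\le 2^n(n-1)+O(n^2)$, not the stated bound $2^n(n-1)+1$; obtaining the latter requires a genuinely sharper argument than the one you outline, and your proposal does not contain it.
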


In other words, if a bipartite graph $G$ has sufficiently many vertices then we can always find in it two sets of size $n$, each contained in a different bipartition class, that are either complete or anti-complete to each other.
In the same flavor, the following classical result of Kővári, Sós and Turán~\cite{kHovari1954problem} gives a lower bound on the number of edges of a (not necessarily bipartite) graph $G$ so that it contains a large complete balanced subgraph. 
\begin{theorem}[Kővári, Sós, Turán~\cite{kHovari1954problem}]\label{thm:kst}
There exists a function $c\colon \N\to (0,\infty)$ such that if an $n$-vertex graph $G$ has at least $c(k) n^{2-1/k}$ edges for some $k \in \mathbb{N}$, then it contains $K_{k,k}$ as a subgraph.
\end{theorem}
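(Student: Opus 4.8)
The plan is to prove the contrapositive by a double-counting argument on copies of the star $K_{1,k}$, i.e.\ on pairs consisting of a vertex together with a $k$-element subset of its neighbourhood. Fix $k \in \N$, and suppose $G$ is an $n$-vertex graph with $m$ edges that does \emph{not} contain $K_{k,k}$ as a subgraph. The goal is to bound $m \le c(k)\,n^{2-1/k}$ for a suitable constant $c(k)>0$, which is exactly what we need.

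First I would count the number $N$ of pairs $(v,S)$ with $v \in V(G)$ and $S$ a $k$-element subset of $N_G(v)$. Grouping by $v$ gives $N = \sum_{v \in V(G)} \binom{\deg_G(v)}{k}$. Grouping instead by $S$: for a fixed $k$-set $S \subseteq V(G)$ the pairs counted are precisely those $v$ in the common neighbourhood $\bigcap_{s \in S} N_G(s)$. If some such common neighbourhood had size at least $k$, then any $k$ of its vertices would form a set $S'$ disjoint from $S$ (no loops) with all $k^2$ edges between $S$ and $S'$ present, i.e.\ a copy of $K_{k,k}$ — contradiction. Hence every common neighbourhood of a $k$-set has size at most $k-1$, so $N \le (k-1)\binom{n}{k}$.

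Next I would lower-bound $N$ by convexity. The map $d \mapsto \binom{d}{k}$ on the non-negative integers has non-negative second differences ($\binom{d+1}{k} - 2\binom{d}{k} + \binom{d-1}{k} = \binom{d-1}{k-2} \ge 0$), so it extends to a convex piecewise-linear function on $[0,\infty)$ whose values dominate the polynomial $\binom{x}{k} = \tfrac{1}{k!}\prod_{i=0}^{k-1}(x-i)$ on the range $x \ge k-1$ where that polynomial is convex. Writing $\bar d = 2m/n$ for the average degree and applying Jensen's inequality to this extension, we get $N \ge n\binom{\bar d}{k} \ge n\,\tfrac{(\bar d - k + 1)^k}{k!}$ whenever $\bar d \ge k-1$. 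Combining with the upper bound from the previous step and the estimate $\binom{n}{k} \le n^k/k!$ yields $(\bar d - k + 1)^k \le (k-1)\,n^{k-1}$, hence $\bar d \le (k-1)^{1/k} n^{1-1/k} + (k-1)$. Since $n \ge 1$ and $1 - 1/k \ge 0$, both $n$ and $n^{1-1/k}$ are at most $n^{2-1/k}$, so $m = \tfrac12 n\bar d \le \tfrac12\big((k-1)^{1/k} + (k-1)\big)\,n^{2-1/k}$; and in the remaining case $\bar d < k-1$ we trivially have $m < \tfrac{k-1}{2}\,n \le \tfrac{k-1}{2}\,n^{2-1/k}$. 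Taking $c(k) := \tfrac12\big((k-1)^{1/k} + k - 1\big) + 1$ then contradicts the hypothesis $m \ge c(k)\,n^{2-1/k}$ in both cases, so $G$ must contain $K_{k,k}$.

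The only step with genuine content is the double count in the second paragraph, together with the observation that a $k$-set with at least $k$ common neighbours directly exhibits $K_{k,k}$. The one technical point requiring care — and the reason I pass to the convex piecewise-linear extension of $\binom{\cdot}{k}$ rather than applying Jensen to the polynomial itself — is that some vertices may have degree below $k-1$, where $\binom{x}{k}$ is not convex; the extension neatly sidesteps this. The rest is bookkeeping of the constant so that the inequality is strict and absorbs the linear term and any rounding, which is the role of the harmless "$+1$" in $c(k)$.
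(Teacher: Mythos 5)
Your argument is correct: the double count of pairs $(v,S)$ with $S$ a $k$-subset of $N_G(v)$, the observation that a $k$-set with $k$ common neighbours yields $K_{k,k}$, and the convexity/Jensen step (with the piecewise-linear extension correctly handling degrees below $k-1$) together give the bound $m\le \tfrac12\bigl((k-1)^{1/k}+k-1\bigr)n^{2-1/k}$ for $K_{k,k}$-free graphs, which is exactly the contrapositive of the stated theorem. Note that the paper does not prove this statement at all --- it is the classical K\H{o}v\'ari--S\'os--Tur\'an theorem, cited from the original reference --- and your proof is precisely the standard double-counting argument for it, so there is no divergence from the paper to report.
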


\section{Onion Harvesting Lemma}\label{sec:wc-lemma}

This section is devoted to the proof of \cref{lem:wc}, the main conceptual piece of this work.

Let $\cP$ be a family of paths in a digraph $D$ and let $X \subseteq V(D)$. 
We say that $\cP$ \emph{starts at $X$} if for every $P \in \cP$ it holds that $\tail(\fst(P))\in X$, and that it \emph{ends at $X$} if for every $P \in \cP$ it holds that $\head(\lst(P))\in X$. If $X=\{x\}$, we omit the brackets and say, respectively, that $\cP$ starts or ends at $x$.

Consider a pair \pqpair{} where $\cP$ and $\cQ$ are families of pairwise arc-disjoint simple paths in $D$ such that there exists $x \in V(D)$ with $\cP$ starting at $x$ and $\cQ$ ending at $x$.
We say that \pqpair{} is a \emph{well-crossing pair rooted at $x$} (or simply \emph{well-crossing pair}) if for every $P \in \cP$ and $Q \in \cQ$ we have $A(P) \cap A(Q) \neq \emptyset$, i.e., if the intersection graph of $(\cP,\cQ)$ is complete. 

We shall prove that, for every $t\geq 1$, if a digraph $D$ contains a well-crossing pair $(\cP,\cQ)$ with both families $\cP$ and $\cQ$ sufficiently large (in terms of $t$), then $D$ admits a $t$-onion-star as an immersion.
This is formalized by the following lemma.

\begin{lemma}[Onion Harvesting Lemma]\label{lem:wc}
Let $t\geq 1$ be an integer and $D$ be a digraph. There exists a function $F\colon\N\to \N$ with the following property: if in $D$ there exists a well-crossing pair $(\cP,\cQ)$ with $|\cP|\geq F(t)$ and $|\cQ|\geq F(t)$, then in $D$ there exists an immersion model of a $t$-onion-star.
\end{lemma}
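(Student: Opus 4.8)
The plan is to exhibit an immersion model of $\overrightarrow{S_t}$ whose centre is the root $x$ of the given well-crossing pair. From the description of $\overrightarrow{S_t}$, such a model is the same thing as $2t$ pairwise arc-disjoint \emph{petals} sharing the vertex $x$: $t$ of them \emph{source-type}, meaning an onion immersion with source $x$ (a vertex $u\neq x$ together with two arc-disjoint $(x,u)$-paths and a third $(u,x)$-path arc-disjoint from them), and $t$ of them \emph{sink-type} (the arc-reversed picture), the $2t$ leaves being pairwise distinct and different from $x$. Since reversing all arcs of $D$ carries a well-crossing pair rooted at $x$ to another one (with $\cP$ and $\cQ$ swapped) and carries immersion models of $\overrightarrow{S_t}$ to immersion models of $\overrightarrow{S_t}$, it is enough to explain how to carve $t$ arc-disjoint source-type petals out of (a part of) a well-crossing pair; applying the same recipe to the reversal produces the sink-type ones, and some care makes the two collections arc-disjoint. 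The obstruction to doing this naively is that the paths of $\cP$ and $\cQ$ may cross one another in a completely unstructured way, so subpaths carved for different petals need not be arc-disjoint. The plan is therefore to first replace $(\cP,\cQ)$ by a large, highly regular sub-pair.

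\emph{Step 1: regularisation.} Observe first that, since the $|\cQ|$ paths of $\cQ$ are pairwise arc-disjoint and each shares an arc with any fixed $P\in\cP$, we have $|A(P)|\geq|\cQ|$, and symmetrically; so the paths are long and there is room to spread things out. Starting from $|\cP|,|\cQ|\geq F(t)$ and repeatedly discarding paths from both families (the well-crossing property is inherited by every sub-pair), I aim to reach a pair that is \emph{regular}: there are enumerations $\cP=(P_1,\dots,P_n)$ and $\cQ=(Q_1,\dots,Q_m)$ such that (i) for all $i,j$ the arcs of $A(P_i)\cap A(Q_j)$ form a single block along $P_i$ and a single block along $Q_j$; (ii) these blocks occur in the order $j=1,2,\dots$ along each $P_i$ and in the order $i=1,2,\dots$ along each $Q_j$; and (iii) every $P_i$ begins with an arc lying on no member of $\cQ$. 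Property (iii) costs one pruning. Properties (i)--(ii) are extracted by iterated Ramsey-type arguments: one builds auxiliary bipartite graphs on $\cP\cup\cQ$ recording ``defects'' (pairs $(P_i,Q_j)$ witnessing non-locality or an inversion of the order), applies \cref{thm:thomasen} to pull out a large sub-pair with no defect of a given kind, and uses \cref{thm:kst} to locate the dense crossing configurations that these extractions act on. The number of extraction rounds grows with $n$ and $m$, which is exactly why $F$ must be taken to be a very fast-growing (tower-type) function of $t$.

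\emph{Step 2: carving the petals.} Inside a regular sub-pair that is still large, fix indices $i_1>i_2>\dots>i_t$ of $\cP$ and $j_1<j_2<\dots<j_t$ of $\cQ$ with the sets $\{i_r-1,i_r\}$ pairwise disjoint and widely separated; this needs only $n,m$ large. Build the $r$-th source-type petal from $P_{i_r}$, $P_{i_r-1}$ and $Q_{j_r}$ as follows. Let $a_r$ be the first arc of $P_{i_r}$ that lies on $Q_{j_r}$, put $u_r:=\tail(a_r)$ (so $u_r\neq x$ by (iii), since then $a_r\neq\fst(P_{i_r})$ and $P_{i_r}$ is simple), and let $b_r$ be the first arc of $P_{i_r-1}$ that lies on $Q_{j_r}$; by (i)--(ii), $b_r$ strictly precedes $a_r$ along $Q_{j_r}$. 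Now take $R_1:=P_{i_r}(\to a_r)$, an $(x,u_r)$-path avoiding $A(Q_{j_r})$ entirely; $R_3:=Q_{j_r}[a_r\to)$, a $(u_r,x)$-path; and $R_2$ obtained by following $P_{i_r-1}$ from $x$ to $\tail(b_r)$ (a prefix avoiding both $A(Q_{j_r})$ and $A(P_{i_r})$) and then following $Q_{j_r}$ from $\tail(b_r)$ to $u_r=\tail(a_r)$ (a stretch using only arcs of $Q_{j_r}$ preceding $a_r$). A short check shows $R_1,R_2,R_3$ are pairwise arc-disjoint, so they form a source-type petal with leaf $u_r$. The monotonicity --- the $j_r$ increasing while the $i_r$ decrease --- is precisely what makes petals of different indices arc-disjoint: the only arcs of $Q_{j_r}$ that could reappear in another petal are those it shares with some $P_i$, which by (i)--(ii) sit in its $i$-th block, whereas the arcs of $Q_{j_r}$ used by petal $r$ itself occupy blocks of index $\geq i_r-1$; the chosen orders force these ranges apart. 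One can also ensure that $u_1,\dots,u_t$ are distinct (by shrinking the family a little more, or shifting which shared arc plays the role of $a_r$). Running the arc-reversed construction on a disjoint portion of the regular structure gives $t$ pairwise arc-disjoint sink-type petals, and an appropriate choice of index ranges together with the same block-monotonicity bookkeeping keeps them arc-disjoint from the source-type petals. Sending $x$ to $x$, the vertices $y_i$ of $\overrightarrow{S_t}$ to the source-leaves and the $z_i$ to the sink-leaves, and the $6t$ arcs of $\overrightarrow{S_t}$ to the $6t$ paths constructed, is then an immersion model of $\overrightarrow{S_t}$.

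\emph{Where the difficulty lies.} The whole content is in Step 1. The tempting shortcut --- fix a single $Q\in\cQ$, examine how $\cP$ crosses it, and cut --- breaks down because a prefix of a $\cP$-path may re-enter the middle of $Q$, so the two out-paths of a single petal (let alone petals sitting at different places along $Q$) need not be arc-disjoint; this is the same wild-crossing phenomenon that makes the premise of the lemma strictly stronger than having many arc-disjoint paths in each direction. Only once the crossings between $\cP$ and $\cQ$ have been straightened into the consistent, localised pattern of Step 1 does the carving of Step 2 become the routine bookkeeping sketched above, and it is the cost of this straightening that dictates the (tower-type) growth rate of $F$.
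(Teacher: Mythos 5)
Your Step 2 is plausible bookkeeping, but, as you yourself note, the whole content is in Step 1, and Step 1 as described does not work. The regularity property (i) --- that $A(P_i)\cap A(Q_j)$ forms a single contiguous block along both paths --- is a property of the individual pair $(P_i,Q_j)$, and it is unaffected by discarding other paths from the families. So it cannot be ``extracted'' by passing to subfamilies: if in the given well-crossing pair every $P$ meets every $Q$ in, say, two widely separated arcs (nothing in the hypotheses forbids this; the families are only required to be internally arc-disjoint), then \emph{every} sub-pair is defective and your regularisation never gets off the ground. Your Ramsey scheme makes this concrete: defining a bipartite ``defect graph'' and applying Thomason (\cref{thm:thomasen}) yields either a large defect-free sub-pair or a large sub-pair in which \emph{all} pairs are defective, and you give no argument at all for the second branch --- the appeal to \cref{thm:kst} to ``locate dense crossing configurations'' is not an argument. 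Nor do you propose any trimming or rerouting of the paths that would localise crossings while preserving the well-crossing property; your trimming only happens in Step 2, after regularity has been assumed. The same issue affects the order-consistency requirement (ii), which concerns $2\times 2$ patterns and again has an unhandled ``consistently bad'' branch. So the proof has a genuine gap precisely at its load-bearing step.

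For comparison, the paper never attempts a global straightening of the crossing pattern. It extracts onions one at a time (its Lemma~\ref{lem:single-out}) using a local dichotomy between \emph{safe} and \emph{dangerous} crossings along a single $Q\in\cQ$: when many crossings are dangerous (roughly, the wild, ``out-of-order'' situation your regularisation would have to eliminate), an onion is harvested directly from two dangerous crossings; when almost all crossings are safe, Kővári--Sós--Turán and a random choice of two paths produce a fresh, smaller well-crossing pair avoiding the harvested onion. Iterating this (and its arc-reversal) yields $2t$ source-rooted and $2t$ sink-rooted onions at $x$, from which the $t$-onion-star is assembled. In other words, the case your plan cannot reach is exactly the case the paper turns to its advantage, which is why no tower-type regularisation is needed there. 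To salvage your approach you would need to either prove a genuine regularisation statement (allowing path trimming, and preserving well-crossingness), or supply a separate onion-harvesting argument for the ``all pairs cross many times / inconsistently'' branch --- at which point you would essentially be reproving the paper's dangerous-crossing case.
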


Let $(\cP,\cQ)$ be a well-crossing pair. 
In the following, we call \emph{crossing} of $(\cP,\cQ)$ an arc $e$ that belongs to $A(P) \cap A(Q)$ for some $P \in \cP$ and $Q \in \cQ$, and specify \emph{$P$-crossing} (resp.~\emph{$Q$-crossing}, \emph{$(P,Q)$-crossing}) for a crossing of $(\cP,\cQ)$ contained in $P$ (resp.~$Q$, $A(P)\cap A(Q)$).
A crossing $e$ is \emph{$(P,Q)$-safe} (or simply \emph{safe}) if there exist at least $|\cQ|/3$ paths in $\cQ \setminus \{Q\}$ whose $<_P$-minimal crossing with $P$ precedes $e$ in $<_P$.
Otherwise, it is said to be $(P,Q)$-\emph{dangerous} (or simply \emph{dangerous}).
We say that a path $P$ is \emph{$Q$-dangerous} if it contains a dangerous $(P,Q)$-crossing.
If a path $Q\in \cQ$ is trimmed to $\trimmed{Q}$, then we say that $P$ is \emph{$\trimmed{Q}$-dangerous} if there exists a dangerous $(P,Q)$-crossing belonging to $\trimmed{Q}$. 

In order to prove \Cref{lem:wc}, we will repeatedly ``harvest'' immersions of single onions rooted at the root of $(\cP,\cQ)$, keeping appropriately large well-crossing pairs disjoint with previously found onions to enable gaining new ones.

\begin{lemma}\label{lem:single-out}
Let $n\geq 1$ be an integer. There exists a function $f\colon\N\to\N$ with the following property: If in a digraph $D$ there exists a well-crossing pair $(\cP,\cQ)$ rooted at $x$ with $|\cP|= f(n)$ and $|\cQ|= f(n)$, then in $D$ there exist an immersion model $\pi$ of an onion with source $x$, and a well-crossing pair $(\cP^*,\cQ^*)$ rooted at $x$ with $|\cP^*|= n$, $|\cQ^*|= n$ such that:
\begin{itemize}[nosep]
\item $A(\cP^*) \subseteq A(\cP)$, and $A(\cQ^*) \subseteq A(\cQ)$;
\item all paths in $\cP^*\cup\cQ^*$ are arc-disjoint with $\pi$;
\item no arc in $A(\cP^*)\cap A(\cQ^*)$ has the sink of $\pi$ for tail.
\end{itemize}
\end{lemma}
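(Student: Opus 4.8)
The plan is to carve a single onion out of just three of the given paths --- two paths $P_1,P_2\in\cP$ and one path $Q\in\cQ$ --- chosen so that only $O(1)$ (or at worst a bounded fraction) of the remaining paths share an arc with it; keeping $n$ of the survivors on each side gives $(\cP^*,\cQ^*)$. Throughout write $\mathrm{fc}(P,Q)$ for the $<_P$-minimal arc of $A(P)\cap A(Q)$; it exists because $(\cP,\cQ)$ is well-crossing.

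For the onion itself: suppose $P_1,P_2\in\cP$ and $Q\in\cQ$ satisfy $\mathrm{fc}(P_2,Q)<_Q\mathrm{fc}(P_1,Q)$, and set $e:=\mathrm{fc}(P_1,Q)$, $e':=\mathrm{fc}(P_2,Q)$, $z:=\tail(e)$. Then $P_1(\to e)$ and $P_2(\to e']\,Q(e',e)$ are $(x,z)$-paths, $Q[e\to)$ is a $(z,x)$-path, and the three are pairwise arc-disjoint; this is an immersion model $\pi$ of an onion with source $x$ and sink $z$, whose arc set is contained in $A(P_1)\cup A(P_2)\cup A(Q(e'\to))$. The verification is routine: a common arc of the first and third paths would be a $(P_1,Q)$-crossing that is $<_{P_1}e$ and $\ge_Q e$, impossible since $e$ is $P_1$'s first crossing with $Q$; the $Q$-piece $Q(e',e)$ of the second path can meet $P_1$ only in a $(P_1,Q)$-crossing that is $<_{P_1}e$, again impossible; and the second path is legitimate and avoids the third because $e'$ is $P_2$'s first crossing with $Q$, so no $(P_2,Q)$-crossing precedes $e'$ on $P_2$; the identity $A(P_1)\cap A(P_2)=\emptyset$ settles the remaining cases.

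To bound what deleting $\pi$ costs on the $\cQ$-side, I would use the safe/dangerous dichotomy. Since the dangerous $(P,Q)$-crossings form a $<_P$-prefix, $P$ is $Q$-dangerous iff $\mathrm{fc}(P,Q)$ is dangerous, i.e.\ fewer than $|\cQ|/3$ paths $Q'\neq Q$ have $\mathrm{fc}(P,Q')<_P\mathrm{fc}(P,Q)$. For fixed $P$, ordering $\cQ$ by the $<_P$-position of $\mathrm{fc}(P,\cdot)$ shows $P$ is $Q$-dangerous for roughly the first third of the paths $Q$; averaging, some $Q$ has a set $G\subseteq\cP$, $|G|\ge|\cP|/3$, of $Q$-dangerous paths. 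Choosing $P_1,P_2\in G$ and building $\pi$ as above, a path $Q'\in\cQ\setminus\{Q\}$ meets $\pi$ only if it crosses $P_1$ strictly before $e$ or crosses $P_2$ up to $e'$, and by $Q$-dangerousness of $P_1$ and $P_2$ there are fewer than $2|\cQ|/3$ such $Q'$; keep $n$ of the rest as $\cQ^*$.

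The crux is the $\cP$-side: $P'\in\cP\setminus\{P_1,P_2\}$ meets $\pi$ precisely when it crosses the suffix $Q(e'\to)$, and a priori this could be all of $\cP$. The fix is to push $e'$ as far along $Q$ as possible --- the verification above only used that $e'$ is $<_Q$-maximal among the $(P_2,Q)$-crossings encountered so far along $P_2$, so one may replace it by the $<_Q$-last $(P_2,Q)$-crossing, and pick $P_2$ (inside $G$) so that this crossing is $<_Q$-extremal; then every path of $G$ all of whose crossings with $Q$ precede $e'$ is automatically disjoint from $Q(e'\to)$, leaving $\ge|\cP|/3-O(1)$ usable paths for $\cP^*$. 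The real work --- and the step I expect to be the main obstacle --- is that $e'$ now sits deep inside $P_2$, so bounding how many $\cQ$-paths cross $P_2$ up to $e'$ requires a strengthened ``dangerous'' condition attached to $<_Q$-last crossings, obtained through a more delicate double count (or, alternatively, one trims the offending $\cP$-paths and argues their surviving prefixes still meet all of $\cQ^*$). Finally, the condition that no arc of $A(\cP^*)\cap A(\cQ^*)$ has tail $z$ is secured by a clean-up: $z\neq x$, and one trims or removes from $\cQ^*$ (re-balancing with $\cP^*$ a constant number of times) every path that leaves the vertex $z$. Composing all the resulting size losses yields the function $f$, which is responsible for its tower-type growth.
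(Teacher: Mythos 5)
Your onion construction from two first crossings, and the double count showing that some $Q\in\cQ$ admits a set $G\subseteq\cP$ of at least $|\cP|/3$ $Q$-dangerous paths, are both sound, and dangerousness of the first crossings does bound the number of $\cQ$-paths meeting the two $P$-prefixes in that unmodified construction. But the step you yourself flag as the crux is exactly where the proposal breaks down, and it is not a technicality. To keep $\cP^*$ large you move the attachment point $e'$ to the $<_Q$-last $(P_2,Q)$-crossing and choose it $<_Q$-extremal in $G$; this, however, (i) only protects paths of $G$ (a third of $\cP$) rather than all of $\cP\setminus\{P_1,P_2\}$, so the claimed $\geq|\cP|/3-O(1)$ survivors for $\cP^*$ does not follow, (ii) if $e'$ is $<_Q$-maximal over $G$ there is no $P_1\in G$ with a crossing after $e'$ left to close the onion, and (iii) once $e'$ sits deep inside $P_2$, dangerousness of $\mathrm{fc}(P_2,Q)$ says nothing about how many $\cQ$-paths meet $P_2(\to e']$, which you concede would need a ``strengthened dangerous condition'' that you do not supply. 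The two requirements --- few $\cP$-paths on the used $Q$-suffix, few $\cQ$-paths on the used $P$-prefixes --- pull in opposite directions, and the proposal contains no mechanism that reconciles them.

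The paper resolves this tension by a different division of labour. First $Q$ is trimmed to the suffix $\bar{Q}=Q[e\to)$ where $e$ is chosen $<_Q$-earliest so that exactly $|\cP|/3$ paths of $\cP$ meet $Q(e\to)$; this controls the $\cP$-side once and for all, for every possible attachment point inside $\bar{Q}$. Then the argument splits according to the number of $\bar{Q}$-dangerous paths. If there are at least two, the onion is hung on suitably chosen dangerous crossings that are $<_Q$-late (with an extra adjustment of the attachment point along $P_2$ to guarantee arc-disjointness, a point your last-crossing variant also glosses over), and dangerousness bounds the $\cQ$-side --- this is roughly the regime your sketch covers. If there is at most one, dangerousness is useless, and the paper instead exploits safety: the $\geq|\cP|/3$ paths whose crossings with $\bar{Q}$ are all safe are trimmed before their first crossing with $\bar{Q}$, each trimmed prefix still meets $\geq|\cQ|/3$ paths of $\cQ$, and two applications of the K\H{o}v\'ari--S\'os--Tur\'an theorem, together with a random choice of $P_1,P_2$ and trimming the $\cQ$-paths after their last crossing with $P_1\cup P_2$, extract a well-crossing pair of size $2n$ that avoids $P_1,P_2$ entirely; only then is the onion built from first crossings with $\bar{Q}$. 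This second case is precisely the situation your ``more delicate double count'' would have to handle, and nothing in the outline produces it. (A minor further point: in the clean-up for the sink condition you should discard only the at most $n$ paths of $\cQ^*$ using an arc of $A(\cP^*)$ with tail at the sink; discarding every path that leaves the sink could eliminate all of $\cQ^*$.)
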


\begin{proof}[Proof of~\cref{lem:single-out}]
Let $c$ be the function from \Cref{thm:kst}. Define functions $g$ and $f$ by:
\[g(n)=\max\left\{8,\left\lceil\frac12(16c(2n))^{2n}\right\rceil\right\},\qquad f(n)=\max\left\{6n,3\cdot\left\lceil\frac14(36c(g(n)))^{g(n)}\right\rceil\right\}.\]
Note that since the values of $f$ are divisible by $3$, so are the sizes of $\cP$ and $\cQ$ for $(\cP,\cQ)$ a well-crossing pair satisfying the assumptions of the lemma.

In the following, we consider one such pair $(\cP,\cQ)$ rooted at a vertex $x$ and fix $Q \in \cQ$. 
Let $e$ be the $<_Q$-smallest arc of $Q$ such that $|\cP(Q(e\to))| = |\cP|/3$. 
Observe that $e$ is a crossing satisfying $|\cP(Q[e\to))|=|\cP(Q(e\to))|+1$. 
Let $\trimmed{Q}:=Q[e\to)$.
We distinguish two cases depending on the number of $\trimmed{Q}$-dangerous paths in $\cP(\trimmed{Q})$. These cases are depicted in \cref{fig:extracted-onion} and \cref{fig:extracted-onion2}, respectively.

\textbf{Case 1.} Suppose that $\cP(\trimmed{Q})$ contains at least two $\trimmed{Q}$-dangerous paths. 
Let $d$ be the $<_Q$-greatest dangerous crossing in $Q$ and let $P_2$ be the path such that $d$ is a $(P_2,Q)$-crossing. 
Then $d$ belongs to $\trimmed{Q}$.
Let $d'$ be the $<_{P_2}$-smallest $(P_2,Q)$-crossing belonging to $\trimmed{Q}(e'\to)$. 
Clearly since $d$ is dangerous and belongs to $\trimmed{Q}$, $d'$ must be dangerous (with possibly $d=d'$). 
Let $e'$ be the $<_Q$-greatest dangerous crossing in $A(Q)\setminus A(P_2)$ and let $P_1\in\cP(\trimmed{Q})\setminus\{P_2\}$ be the path such that $e'$ is a $(P_1,Q)$-crossing (with possibly $e=e'$).

\begin{figure}[H]
    \centering
    \includegraphics[scale=1.25, page=1]{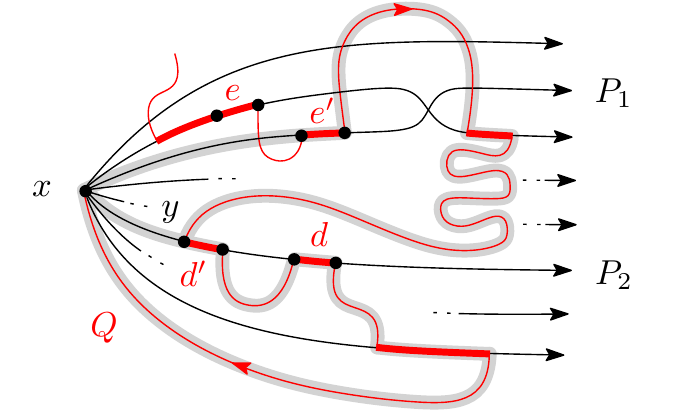}
    \caption{The situation of Case 1 when $e\neq e'$ and $d\neq d'$. Paths of $\cP$ are depicted in black, $Q$ is in red and $Q$-crossings are in thick red. An onion immersion model with source $x$ and sink $y$ is underlined in gray.}\label{fig:extracted-onion}
\end{figure}

Let $y=\tail(d')$.
We claim that the $(x,y)$-paths $P_1(\to e']Q(e',d')$, $P_2(\to d')$ and the $(y,x)$-path $Q[d'\to)$ form an immersion model $\pi$ of an \onion with source $x$ and sink $y$. 
Observe that $P_1(\to e']$ is arc-disjoint with $Q(e'\to)$ as $e'$ is the $<_Q$-greatest dangerous $(P_1,Q)$-crossing, and having a safe $(P_1,Q)$-crossing in $A(P_1(\to e']) \cap A(Q(e' \to ))$ would imply that $e'$ is also safe, a contradiction.
Similarly, the $<_{P_2}$-minimality of $d'$ asserts that $P_2(\to d')$ is arc-disjoint with $Q[e' \to)$.
Note that these arguments hold even if $e=e'$ and $d=d'$.
Finally, $Q(e',d')$ and ${Q[d'\to)}$ are arc-disjoint as $Q$ is simple.
Hence, the path ${P_1(\to e']Q(e',d')}$ is a well-defined (although not necessarily simple) $(x,y)$-path, and paths $P_1(\to e']Q(e',d)$, $P_2(\to d')$, and $Q[d'\to)$ are pairwise arc-disjoint, yielding an immersion model $\pi$ of an onion with source $x$ in $D$.

Let $\cQ^\circ \subseteq \cQ$ be the set of paths which are arc-disjoint both with $P_1(\to e']$ and $P_2(\to d']$ (so in particular $Q\notin \cQ^\circ$).
Since $e'$ and $d'$ are both dangerous, $|\cQ^\circ|> |\cQ|-|\cQ|/3-|\cQ|/3=|\cQ|/3\geq 2n$ by the definition of $f$.
Let us pick an arbitrary collection $\cP^*\subseteq\cP\setminus \{P_1,P_2\}$ of paths not intersecting $Q[e'\rightarrow)$ with $|\cP^*|=n$.
Such a collection exists as $|\cP(Q(e\to))| = |\cP|/3$ yielding $\smash{\frac23}|\cP|\geq 4n$ candidate paths for $\cP^*$.
Note that each path in such a collection has at most one arc whose tail is $y$ (as the paths are simple), so there are at most $|\cP^*|=n$ arcs in $A(\cQ^\circ)\cap A(\cP^*)$ whose tail is $y$. Let $\cQ^*\subseteq \cQ^\circ$ comprise of any $n$ paths disjoint with these arcs.
Clearly, since $\cP^* \subseteq \cP$ and $\cQ^* \subseteq \cQ$,  $(\cP^*,\cQ^*)$ is a well-crossing pair. Hence $\pi$, $\cP^*$ and $\cQ^*$ satisfy all the desired properties.

\textbf{Case 2.} Suppose that there is at most one $\trimmed{Q}$-dangerous path in $\cP(\trimmed{Q})$. 
Let $\cS$ be the set of those paths in $\cP(\trimmed{Q})$ whose all crossings with $\trimmed{Q}$ are safe, and let
\[
    \trimmed{\cS}\coloneqq\left\{P(\to\min{_{<_P}}\{A(P)\cap A(\trimmed{Q})\}) \mid P\in \cS\right\}.
\]
This is the set of paths from $\cS$ trimmed so that they start at $x$,  have no arcs in common with $\trimmed{Q}$ and are maximally long.
Since $|\cP(\trimmed{Q})| = |\cP|/3+1$ and there is at most one dangerous path in $\cP(\trimmed{Q})$, we derive $|\trimmed{\cS}|=|\cS|\geq|\cP(\trimmed{Q})|-1=|\cP|/3$. 
We shall first exhibit large well-crossing subfamilies of $\trimmed{\cS}$ and $\cQ$.

\begin{figure}[H]
    \centering
    \includegraphics[scale=1.25, page=2]{extracted-onion.pdf}
    \caption{The situation of Case 2. Paths in $\trimmed{\cS}$ are depicted in blue, $Q$ is in red and $Q$-crossings are in thick red. Paths $P_1$ and $P_2$ are those of \Cref{cla:paths}. An onion immersion model with source $x$ and sink $y$ is underlined in gray.}\label{fig:extracted-onion2}
\end{figure}

\begin{claim}\label{cla:same-paths}
There exist subfamilies $\trimmed{\cS}'\subseteq \trimmed{\cS}$, $\cQ'\subseteq \cQ\setminus \{Q\}$ such that $|\trimmed{\cS}'|=g(n)$, $|\cQ'|=g(n)$ and $(\trimmed{\cS}',\cQ')$ is well-crossing.
\end{claim}

\begin{proof}[Proof of claim]
Let $G$ be the intersection graph of $(\trimmed{\cS},\cQ)$. 
By definition, each trimmed path $\trimmed{P}$ in $\trimmed{\cS}$ originates from a path $P$ in $\cS$ whose $(P,Q)$-crossings $e$ within $\trimmed{Q}$ are safe, i.e., there are at least $|\cQ|/3$ distinct paths among $\cQ\setminus \{Q\}$ crossing $\trimmed{P}$ before $e$ with respect to $<_{\trimmed{P}}$.
Since moreover $|\trimmed{\cS}|\geq|\cP|/3$, we get
\[
    |E(G)|=\sum_{\trimmed{P}\in\trimmed{\cS}}\deg_G(\trimmed{P})\geq \sum_{\trimmed{P}\in\trimmed{\cS}}\frac{|\cQ|}{3}\geq \frac{|\cP|}{3} \cdot  \frac{|\cQ|}{3}=\left(\frac{f(n)}{3}\right)^2.
\]
Note in addition that $\frac43 f(n)=\frac{|\cP|}{3} + |\cQ| \leq |V(G)|\leq 2f(n)$. 
By \Cref{thm:kst} there exists a constant $c_1=c(g(n))$ such that if 
\[
    |E(G)|\geq c_1\cdot |V(G)|^{2-\frac1{g(n)}},
\]
then $G$ contains $K_{g(n),g(n)}$ as a subgraph, corresponding to the desired well-crossing pair $(\trimmed{\cS}',\cQ')$. 
For the above inequality to hold it is enough to guarantee that
\begin{equation}\label{eq:kstapp1}
\frac{|E(G)|}{|V(G)|^2}\geq \frac{c_1}{|V(G)|^{\frac{1}{g(n)}}}.
\end{equation}
By $|V(G)|\leq 2f(n)$ and $|E(G)|\geq \frac19 f(n)^2$ we first get
\begin{equation}\label{eq:kstapp1step1}
\frac{|E(G)|}{|V(G)|^2}\geq \frac{\frac19 f(n)^2}{4f(n)^2}=\frac{1}{36}.
\end{equation}
From the definition of $f$ it follows that $f(n)\geq \frac{3}{4}(36c_1)^{g(n)}$, so equivalently
\begin{equation}\label{eq:kstapp1step2}
\frac{1}{36}\geq \frac{c_1}{\left(\frac43 f(n)\right)^{\frac1{g(n)}}}.
\end{equation}
Moreover, $|V(G)|\geq\frac43 f(n)$ gives
\begin{equation}\label{eq:kstapp1step3}
\frac{c_1}{\left(\frac43 f(n)\right)^{\frac1{g(n)}}}\geq \frac{c_1}{|V(G)|^{\frac{1}{g(n)}}}.
\end{equation}
Putting together inequalities \cref{eq:kstapp1step1},  \cref{eq:kstapp1step2},  \cref{eq:kstapp1step3}, we obtain \cref{eq:kstapp1}, which finishes the proof.
\cqed
\end{proof}

Consider the pair $(\trimmed{\cS}',\cQ')$ given by \cref{cla:same-paths} and
let $\cS'$ be the set of paths in $\cS$ from which $\trimmed{\cS}'$ originates (i.e., the paths from $\trimmed{\cS}'$ before trimming).
For two distinct paths $P_1,P_2 \in \cS'$, we denote  
\[
    \trimmed{\cQ}'(P_1,P_2)\coloneqq\left\{R(\max{_{<_R}}\{A(\{P_1,P_2\})\cap A(R)\}\to)\ \mid\ R \in \cQ'\right\}.
\]
This is the set of paths from $\cQ'$ trimmed so that they end at $x$, have no arcs in common with $P_1$ or $P_2$ and are longest possible.

\begin{claim}\label{cla:paths}
There exist distinct $P_1,P_2 \in \cS'$ and subfamilies $\cP^\circ\subseteq \trimmed{\cS}'$, $\cQ^\circ\subseteq \trimmed{\cQ}'(P_1,P_2)$ such that $|\cP^\circ| = 2n$, $|\cQ^\circ| = 2n$  and $(\cP^\circ,\cQ^\circ)$ is well-crossing.
\end{claim}

\begin{proof}[Proof of claim]
Let $k\coloneqq g(n)=|\cQ'|=|\trimmed{\cS}'|=|\cS'| $.
Choose a pair $\{P_1,P_2\}\subseteq \cS'$ uniformly at random (from the set of all $\binom{k}{2}$ two-element subsets). Let $\trimmed{\cQ}'\coloneqq\trimmed{\cQ}'(P_1,P_2)$ and let $G$ be the intersection graph of $(\trimmed{\cS}',\trimmed{\cQ}')$. Clearly, $|\trimmed{\cQ}'|=|\cQ'|=k$. For $\trimmed{R}\in \trimmed{\cQ}'$ let $N_{\trimmed{R}}$ be the random variable denoting the number of paths $P\in \trimmed{\cS}'$ such that $A(\trimmed{R})\cap A(P)\neq\emptyset$, i.e., the degree of $\trimmed{R}$ in $G$.

Fix $\trimmed{R}\in \trimmed{\cQ}'$ and let $R\in \cQ'$ be the path from which $\trimmed{R}$ originates (i.e., the untrimmed counterpart of $\trimmed{R}$). 
For every $P\in \trimmed{\cS}'$ let $a(P)\coloneqq \max_{<_{R}}\{A(P)\cap A(R)\}$ be the $<_{R}$-greatest common arc of $P$ and $R$. 
As $(\trimmed{\cS}',\cQ')$ is well-crossing, the $k$-element set $\{a(P)\mid P\in\trimmed{\cS}'\}\subseteq A(R)$ is naturally linearly ordered by $<_{R}$.
Label all paths from $\trimmed{\cS}'$ with numbers from $1$ to $k$ accordingly with the order of appearance of arcs $a(P)$ on $R$, and let $\ell(P)$ be the label of $P$.
Note that $N_{\trimmed{R}} \geq k-\max(\ell(P_1),\ell(P_2))$ and for every $i=0,1,\ldots,k-1$,
\[
    \PP(\max(\ell(P_1),\ell(P_2))=i+1)=\frac{i}{\binom{k}{2}}.
\]
By a straightforward computation we obtain
\[
    \EE N_{\trimmed{R}} =\sum_{i=0}^{k-1}\PP(N_{\trimmed{R}}=k-i-1)(k-i-1)=\frac1{\binom{k}{2}}\sum_{i=0}^{k-1}i(k-i-1)=\frac{k-2}{3}.
\]

Note that the distribution of $N_{\trimmed{R}}$ depends only on labels of $P_1$ and $P_2$ and since these are chosen randomly, for different $\trimmed{R},\trimmed{S}\in\trimmed{\cQ}'$ the variables $N_{\trimmed{R}}$, $N_{\trimmed{S}}$ are identically distributed.  
By the above observations and since $k=g(n)\geq 8$, we have
\[\EE |E(G)|\geq \EE \sum_{\trimmed{S}\in\trimmed{\cQ}'} N_{\trimmed{S}} \geq k\cdot\frac{k-2}{3}\geq \left(\frac{k}{2}\right)^2.\]
It follows that there exists a choice of $P_1$, $P_2$ such that
\begin{equation}\label{kstapp2s1}
|E(G)|\geq \left(\frac{g(n)}{2}\right)^2.
\end{equation}
Similarly as in the proof of \Cref{cla:same-paths}, we observe by \Cref{thm:kst} that there exists a constant $c_2=c(2n)$ such that if 
\[|E(G)|\geq c_2\cdot |V(G)|^{2-\frac1{2n}},\]
then $G$ contains $K_{2n,2n}$ as a subgraph, corresponding to the desired well-crossing pair $(\cP^\circ,\cQ^\circ)$. For the above inequality to hold it is enough to guarantee that
\[\frac{|E(G)|}{|V(G)|^2}\geq \frac{c_2}{|V(G)|^{\frac1{2n}}}.\]
This is obtained observing that
\[\frac{|E(G)|}{|V(G)|^2}=\frac{|E(G)|}{4(g(n))^2}\geq \frac1{16}\geq \frac{c_2}{(2g(n))^{\frac1{2n}}}=\frac{c_2}{|V(G)|^{\frac1{2n}}},\]
where the first inequality follows from \cref{kstapp2s1}, and the second inequality follows from the definition of $g$ ensuring that $g(n)\geq \frac12(16c_2)^{2n}$.
\cqed
\end{proof}

Consider $P_1$, $P_2$ and $(\cP^\circ,\cQ^\circ)$ as in the claim above and denote
$e_i=\min_{<_{P_i}}\{A(\trimmed{Q})\cap A(P_i)\}$ for $i=1,2$. Assume without loss of generality that
$e_1<_Q e_2$, i.e., that the $<_{P_1}$-smallest $(P_1,\trimmed{Q})$-crossing is further from $x$ on $\trimmed{Q}$ than the $<_{P_2}$-smallest $(P_2,\trimmed{Q})$-crossing. Let $y=\tail(e_2)$.
By the choice of $e_1$ and $e_2$, the $(x,y)$-paths $P_2(\to e_2)$, $P_1(\to e_1]Q(e_1,e_2)$ and the $(y,x)$-path $Q[e_2\to)$ are arc-disjoint and hence form an immersion model $\pi$ of an onion with source $x$ and sink $y$. 
Moreover, this model is disjoint with $A(\cP^\circ\cup \cQ^\circ)$ as $\cQ^\circ\subseteq \trimmed{\cQ}'$ with $\trimmed{\cQ}'$ arc-disjoint with both $P_1$ and $P_2$. 

The last step (to ensure that $y$ will not be the tail of a crossing in the found well-crossing pair) is performed similarly as in the first case: we pick an arbitrary collection $\cP^\ast$ of exactly $n$ paths among the $2n$ paths in $\cP^\circ$, and select a subfamily $\cQ^\ast$ consisting of $n$ paths which do not contain crossings of tail $y$ among the $2n$ paths in $\cQ^\circ$.

That concludes the proof of \Cref{lem:single-out}.
\end{proof}

Now, we observe that by iterating \Cref{lem:single-out} $t$ times, we can obtain an immersion model of a digraph consisting of $t$ onions with a common source.

\begin{corollary}\label{cor:multiple-out}
Let $n$, $t$ be positive integers. Suppose that in a digraph $D$ there exists a well-crossing pair $(\cP,\cQ)$ rooted at $x$ with $|\cP|=|\cQ|= f^\comp{t}(n)$,
where $f$ is the function from \Cref{lem:single-out}.
Then in $D$ there exists a family $\cH$ of $t$ arc-disjoint immersion models of onions with common source $x$, whose all sinks are pairwise different, and a well-crossing pair $(\cP^*,\cQ^*)$ rooted at $x$ with $|\cP^*|= n$, $|\cQ^*|= n$ such that all paths in $\cP^*\cup\cQ^*$ are arc-disjoint with every element of $\cH$.
\end{corollary}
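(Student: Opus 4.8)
The plan is to prove \Cref{cor:multiple-out} by induction on $t$, harvesting the onions one at a time with \Cref{lem:single-out}. The statement as phrased is not quite strong enough to feed into itself, so I would carry a strengthened inductive hypothesis: in addition to the conclusion of \Cref{cor:multiple-out}, the returned well-crossing pair $(\cP^*,\cQ^*)$ satisfies $A(\cP^*)\subseteq A(\cP)$ and $A(\cQ^*)\subseteq A(\cQ)$; every onion in $\cH$ uses only arcs of $A(\cP)\cup A(\cQ)$; and the sink of every onion in $\cH$ is the tail of some arc of $A(\cP)\cap A(\cQ)$. For the single onion produced by \Cref{lem:single-out} all three extra properties are immediate from the construction in its proof: the onion there is assembled from prefixes and segments of two members of $\cP$ and one member of $\cQ$, and its sink is by definition the tail of a crossing of $(\cP,\cQ)$, hence of an arc of $A(\cP)\cap A(\cQ)$; the inclusions $A(\cP^*)\subseteq A(\cP)$, $A(\cQ^*)\subseteq A(\cQ)$ are the first bullet of \Cref{lem:single-out}.

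The base case $t=0$ is trivial (take $\cH=\emptyset$ and $(\cP^*,\cQ^*)=(\cP,\cQ)$; the corollary is stated for positive $t$, but including this degenerate base does no harm). For the inductive step, suppose $(\cP,\cQ)$ is a well-crossing pair rooted at $x$ with $|\cP|=|\cQ|=f^\comp{t}(n)=f(f^\comp{t-1}(n))$. First apply \Cref{lem:single-out} to $(\cP,\cQ)$ with its integer parameter set to $f^\comp{t-1}(n)$; this produces an immersion model $\pi$ of an onion with source $x$ and some sink $y$, together with a well-crossing pair $(\cP',\cQ')$ rooted at $x$ with $|\cP'|=|\cQ'|=f^\comp{t-1}(n)$ such that $A(\cP')\subseteq A(\cP)$, $A(\cQ')\subseteq A(\cQ)$, all paths of $\cP'\cup\cQ'$ are arc-disjoint from $\pi$, and no arc of $A(\cP')\cap A(\cQ')$ has $y$ for tail. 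Then apply the strengthened inductive hypothesis to $(\cP',\cQ')$ to obtain a family $\cH'$ of $t-1$ pairwise arc-disjoint onions with common source $x$ and pairwise distinct sinks, together with a well-crossing pair $(\cP^*,\cQ^*)$ rooted at $x$ with $|\cP^*|=|\cQ^*|=n$, arc-disjoint from every member of $\cH'$, with $A(\cP^*)\subseteq A(\cP')$, $A(\cQ^*)\subseteq A(\cQ')$, every onion of $\cH'$ confined to $A(\cP')\cup A(\cQ')$, and each sink of $\cH'$ the tail of an arc of $A(\cP')\cap A(\cQ')$. Output $\cH:=\cH'\cup\{\pi\}$ and $(\cP^*,\cQ^*)$.

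Verifying the conclusion is then mostly routine. Since each onion of $\cH'$ lives in $A(\cP')\cup A(\cQ')$, which is disjoint from the arcs of $\pi$, the model $\pi$ is arc-disjoint from every member of $\cH'$; hence $\cH$ is a family of $t$ pairwise arc-disjoint onions, all with source $x$. Likewise $A(\cP^*)\cup A(\cQ^*)\subseteq A(\cP')\cup A(\cQ')$ is disjoint from the arcs of $\pi$, so $\cP^*\cup\cQ^*$ is arc-disjoint from $\pi$ and, by the inductive hypothesis, from every member of $\cH'$. The bookkeeping propagates because $A(\cP')\subseteq A(\cP)$, $A(\cQ')\subseteq A(\cQ)$, whence $A(\cP')\cap A(\cQ')\subseteq A(\cP)\cap A(\cQ)$: so $A(\cP^*)\subseteq A(\cP)$, $A(\cQ^*)\subseteq A(\cQ)$, both $\pi$ and all onions of $\cH'$ lie in $A(\cP)\cup A(\cQ)$, and every sink of an onion in $\cH$ is the tail of an arc of $A(\cP)\cap A(\cQ)$.

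The one point that genuinely uses the strengthening, and the main obstacle, is the pairwise distinctness of the sinks of the $t$ onions in $\cH$. The sinks of the onions of $\cH'$ are pairwise distinct by the inductive hypothesis, so it remains to see that the new sink $y$ differs from the sink $y'$ of each onion of $\cH'$. By the recorded invariant, $y'$ is the tail of some arc of $A(\cP')\cap A(\cQ')$, whereas \Cref{lem:single-out} guarantees that no arc of $A(\cP')\cap A(\cQ')$ has $y$ for tail; hence $y'\neq y$. This is precisely the purpose of the third bullet of \Cref{lem:single-out}: without it, successive harvesting could keep producing the same vertex as a sink, and the invariant that each harvested sink is the tail of a crossing arc is exactly what makes that guarantee usable one level deeper in the recursion. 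Everything else is straightforward.
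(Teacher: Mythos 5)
Your proof is correct and takes essentially the same route as the paper, which simply iterates \Cref{lem:single-out} $t$ times on nested pairs $(\cP_i,\cQ_i)$ and uses the third bullet together with the nesting $A(\cP_{i})\cap A(\cQ_{i})\subseteq A(\cP_j)\cap A(\cQ_j)$ to separate the sinks. The only difference is presentational: you phrase the iteration as an induction and record explicitly two invariants (each harvested onion uses only arcs of $A(\cP)\cup A(\cQ)$, and its sink is the tail of a crossing of the input pair) that the paper invokes implicitly from the construction in the proof of \Cref{lem:single-out}.
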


\begin{proof}
Put $\cP_0\coloneqq \cP$ and $\cQ_0\coloneqq \cQ$. For $i=0,1,\ldots,t-1$ let us apply \Cref{lem:single-out} to the pair $(\cP_i,\cQ_i)$ to get an immersion model of an onion $\pi_{i+1}$ with sink $y_{i+1}$, and a~well-crossing pair $(\cP_{i+1},\cQ_{i+1})$. 

Now it is enough to take $\cP^*=\cP_t$, $\cQ^*=\cQ_t$ and $\cH=\{\pi_i\}^{t}_{i=1}$.
Indeed, to see that the sinks $y_1,\ldots,y_t$ are distinct, recall that for every $i \in \{0,\ldots, t-1\}$ no arc in $A(\cP_{i+1})\cap A(\cQ_{i+1})$ has $y_{i+1}$ as a tail.
Since $A(\cP_i) \subseteq A(\cP_j)$, and $A(\cQ_i) \subseteq A(\cQ_j)$ for every $0\leq j \leq i$, vertices $y_1,\ldots,y_i$ do not appear as tails of arcs in $A(\cP_{i}) \cap A(\cQ_{i})$, hence $y_{i+1}$ must be distinct from $y_1,\ldots,y_i$.
The remaining properties of the immersion model are straightforward to verify. 
\end{proof}

An analogous lemma can be designed to enable finding onions whose sink (not source) is~$x$. The proof basically follows from reversing all arcs in the arguments delivered previously.

\begin{corollary}\label{cor:multiple-in}
Let $n$, $t$ be positive integers. Suppose that in a digraph $D$ there exists a well-crossing pair $(\cP,\cQ)$ rooted at $x$ with $|\cP|=|\cQ|= f^\comp{t}(n)$,
where $f$ is the function from \Cref{lem:single-out}.
Then in $D$ there exists a family $\cH$ of $t$ arc-disjoint immersion models of onions with common \underline{sink}~$x$, whose all \underline{sources} are pairwise different, and a well-crossing pair $(\cP^*,\cQ^*)$ rooted at $x$ with $|\cP^*|= n$, $|\cQ^*|= n$ such that all paths in $\cP^*\cup\cQ^*$ are arc-disjoint with every element of $\cH$.
\end{corollary}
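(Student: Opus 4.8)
The plan is to obtain \cref{cor:multiple-in} from \cref{cor:multiple-out} by a single top-level application of arc reversal, rather than by literally replaying the arguments of \cref{lem:single-out} and \cref{cor:multiple-out} with all arcs reversed. For a digraph $D$, let $\overleftarrow{D}$ be the digraph on the same vertex set obtained by reversing the orientation of every arc, and for an arc $a$ write $\overleftarrow{a}$ for its reversal. This operation is an involution, it sends a loopless digraph with possibly parallel arcs to one of the same kind, and it carries a path $P=(a_1,\dots,a_k)$ from $s$ to $t$ in $D$ to the path $\overleftarrow{P}\coloneqq(\overleftarrow{a_k},\dots,\overleftarrow{a_1})$ from $t$ to $s$ in $\overleftarrow{D}$, with $A(\overleftarrow{P})=\{\overleftarrow{a}:a\in A(P)\}$; in particular arc-disjointness of two paths and simplicity of a path are preserved. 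For a family $\cP$ of paths I write $\overleftarrow{\cP}\coloneqq\{\overleftarrow{P}:P\in\cP\}$.

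First I would record how the three notions appearing in the statement transform under reversal. An immersion model $\pi$ of a digraph $H$ in $D$ turns into an immersion model of $\overleftarrow{H}$ in $\overleftarrow{D}$: keep the same vertex map, send each arc $uv\in A(\overleftarrow{H})$ to the path $\overleftarrow{\pi(vu)}$ (which runs from $\pi(u)$ to $\pi(v)$), and observe that arc-disjointness of the images is inherited. Since the onion $\overrightarrow{O}$ is isomorphic to its own arc-reversal via the bijection of $V(\overrightarrow{O})$ transposing its two roots, an immersion model of an onion with source $a$ and sink $b$ in $\overleftarrow{D}$ corresponds, after reversal, to an immersion model of an onion with source $b$ and sink $a$ in $D$. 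Finally, if $(\cP,\cQ)$ is a well-crossing pair rooted at $x$ in $D$ --- so $\cP$ starts at $x$, $\cQ$ ends at $x$, each family is pairwise arc-disjoint, and the intersection graph of $(\cP,\cQ)$ is complete --- then $\overleftarrow{\cQ}$ starts at $x$, $\overleftarrow{\cP}$ ends at $x$, and $(\overleftarrow{\cQ},\overleftarrow{\cP})$ is a well-crossing pair rooted at $x$ in $\overleftarrow{D}$ of the same two sizes; the swap of the two families is exactly what makes the starting and ending conditions survive.

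With this dictionary the argument is immediate. Given a well-crossing pair $(\cP,\cQ)$ rooted at $x$ in $D$ with $|\cP|=|\cQ|=f^{\comp{t}}(n)$, pass to the well-crossing pair $(\overleftarrow{\cQ},\overleftarrow{\cP})$ rooted at $x$ in $\overleftarrow{D}$, which again has both sides of size $f^{\comp{t}}(n)$. Apply \cref{cor:multiple-out} inside $\overleftarrow{D}$: with the same constant it yields a family $\cH_0$ of $t$ pairwise arc-disjoint immersion models of onions in $\overleftarrow{D}$ with common source $x$ and pairwise distinct sinks, together with a well-crossing pair $(\cR,\cS)$ rooted at $x$ with $|\cR|=|\cS|=n$, all of whose paths are arc-disjoint from every model of $\cH_0$. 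Now reverse everything back: $\cH\coloneqq\{\overleftarrow{\rho}:\rho\in\cH_0\}$ is a family of $t$ pairwise arc-disjoint immersion models of onions in $D$ with common sink $x$ and --- because reversal transposes the source and the sink of each onion --- pairwise distinct sources, while $(\overleftarrow{\cS},\overleftarrow{\cR})$ is a well-crossing pair rooted at $x$ in $D$ with both sides of size $n$ whose paths are arc-disjoint from every model of $\cH$ (arc-disjointness being preserved by reversal). This is precisely the conclusion of \cref{cor:multiple-in}.

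There is no genuinely hard step: the content is entirely in verifying that paths, arc-disjointness, immersion models, and the notion of a well-crossing pair rooted at $x$ are each symmetric under arc reversal, the one point to watch being the transposition of the two families in a well-crossing pair. Once that bookkeeping is in place, \cref{cor:multiple-out} applies verbatim to $\overleftarrow{D}$ with the very same bound, and pulling its output back through the reversal exchanges the words ``source'' and ``sink'' throughout --- which is exactly the difference between \cref{cor:multiple-out} and \cref{cor:multiple-in}.
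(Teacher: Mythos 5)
Your proposal is correct and coincides with the paper's own proof: the paper likewise obtains \cref{cor:multiple-in} by applying \cref{cor:multiple-out} to the arc-reversed digraph $\overleftarrow{D}$ together with the swapped pair $(\overleftarrow{\cQ},\overleftarrow{\cP})$ of reversed paths, exactly as you do. Your write-up merely makes explicit the routine verification that paths, arc-disjointness, onion models (with source and sink exchanged), and well-crossing pairs are preserved under reversal, which the paper leaves implicit.
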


\begin{proof}
It is enough to apply \Cref{cor:multiple-out} to the digraph $\overleftarrow{D}$, in which each arc of $D$ is replaced with a reversed arc, and to the (swapped) well-crossing pair $\smash{(\overleftarrow{\cQ},\overleftarrow{\cP})}$ consisting of families of reversed paths from $\cQ$ and~$\cP$, respectively. 
\end{proof}

Finally, we use \cref{cor:multiple-out} and \cref{cor:multiple-in} to prove \Cref{lem:wc}.

\begin{proof}[Proof of \Cref{lem:wc}]
Take $F(t)=f^\comp{4t}(1)$, where $f$ is the function satisfying \Cref{lem:single-out}. Let $x$ be the root of the pair $(\cP,\cQ)$. Applying \Cref{cor:multiple-out} to $D$, $(\cP,\cQ)$, $n=f^\comp{2t}(1)$ and $2t$ (in place of~$t$) we find a family $\cH^+=\{\pi^+_i\}_{i=1}^{2t}$ of $2t$ arc-disjoint immersion models of onions with source $x$ and mutually different sinks $\{y_i\}_{i=1}^{2t}$, along with a well-crossing pair $(\cP^*,\cQ^*)$ in which both families have size at least $f^\comp{2t}(1)$ and are arc-disjoint with elements of $\cH^+$. 
Now applying \Cref{cor:multiple-in} to $D$, $(\cP^*,\cQ^*)$, $n=1$ and $2t$ (in place of $t$), we find a family $\cH^-=\{\pi^-_j\}_{j=1}^{2t}$ of $2t$ arc-disjoint immersion models of onions with sink $x$ and mutually different sources $\{z_j\}_{j=1}^{2t}$. Moreover, $A(\cH^+)\cap A(\cH^-)=\emptyset$. It now remains to note that it is possible to find two sets $I,J\subseteq [2t]$ such that $|I|=|J|=t$ and $\{y_i : i\in I\}\cap\{z_j : j\in J\}=\emptyset$. 
The union of immersions $\pi^+_i$ where $i\in I$ and $\pi^-_j$ where $j\in J$ forms an immersion of a \os{t}.
\end{proof}

\section{Proof of \texorpdfstring{\cref{thm:no-cut}}{Theorem 1.3}}\label{sec:proof}
In this section we prove \Cref{thm:onion-harvesting} and then \Cref{thm:no-cut}. 

For a digraph $D$, a vertex $y \in V(D)$ and a set $Z \subseteq V(D) \setminus \{y\}$ we denote by $\mu(y,Z)$ ($\mu(Z,y)$, respectively) the maximum number of arc-disjoint paths from $y$ to $Z$ (from $Z$ to $y$, respectively) that can be found in $D$.

Since our proof of \cref{thm:no-cut} requires an iterative application of \Cref{thm:onion-harvesting}, to simplify this procedure we actually prove the following, slightly more general version of the latter one.
Note that \Cref{thm:onion-harvesting} follows from \Cref{lem:onion-harvesting} by taking $Y=\{y\}$.

\begin{theorem}\label{lem:onion-harvesting}
There exists a function $g\colon \N\times \N\to \N$ such that the following holds.
 Suppose $D$ is a digraph, $y \in V(D)$, and $Z$ is a subset of $V(D) \setminus\{y\}$.
Let $\cP$ and $\cQ$ be maximum families of pairwise arc-disjoint simple paths such that elements of $\cP$ start at $y$ and end at $Z$, and elements of $\cQ$ start at $Z$ and end at $y$.
Assume that $|\cP|>g(t,k)$ and $|\cQ|>g(t,k)$, for some $t,k\in \N$. Then at least one of the following holds:
 \begin{itemize}[nosep]
  \item $D$ contains the $t$-onion star as an immersion;
  \item there is a subset $\cP'$ of $\cP$, and a subset $\cQ'$ of $\cQ$, each consisting of~$k$ paths, such that the elements of $\cP' \cup \cQ'$ are pairwise arc-disjoint.
 \end{itemize}
\end{theorem}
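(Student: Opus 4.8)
The plan is to derive both alternatives of the conclusion from a single Ramsey-type dichotomy applied to the intersection graph of the pair $(\cP,\cQ)$. Call this graph $G$: it is bipartite with classes $\cP$ and $\cQ$, with an edge $PQ$ precisely when $A(P)\cap A(Q)\neq\emptyset$. The key observation is that the two families are anchored, on the side where they meet, at the \emph{single} vertex $y$: every path of $\cP$ starts at $y$ and every path of $\cQ$ ends at $y$. Hence any subfamilies $\cP'\subseteq\cP$, $\cQ'\subseteq\cQ$ such that in $G$ every path of $\cP'$ is adjacent to every path of $\cQ'$ automatically form a well-crossing pair rooted at $y$ (pairwise arc-disjointness within each family, and simplicity, are inherited from $\cP$ and $\cQ$), whereas any subfamilies with no $G$-edges between them are pairwise arc-disjoint across, so that $\cP'\cup\cQ'$ is a pairwise arc-disjoint family. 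The whole task thus reduces to finding, inside a large balanced bipartite graph, either a large balanced complete-bipartite subgraph or a large balanced anti-complete pair.

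Concretely, I would let $F$ be the function from the Onion Harvesting Lemma (\Cref{lem:wc}) and $b$ the function from Thomason's theorem (\Cref{thm:thomasen}), and set
\[
 n \coloneqq \max\{k,\,F(t)\},\qquad g(t,k)\coloneqq b(n).
\]
Given $\cP$ and $\cQ$ with $|\cP|,|\cQ|>g(t,k)$, restrict to subfamilies $\cP_0\subseteq\cP$, $\cQ_0\subseteq\cQ$ of size exactly $b(n)$ each; then the subgraph $G_0$ of $G$ induced by $\cP_0\cup\cQ_0$ is balanced bipartite on $2b(n)$ vertices. By \Cref{thm:thomasen}, either $G_0$ contains an induced $K_{n,n}$, or it contains $A\subseteq\cP_0$ and $B\subseteq\cQ_0$ of size $n$ that are anti-complete. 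In the first case I pick $F(t)$ vertices on each side of that induced $K_{n,n}$ (possible since $n\geq F(t)$), obtaining a well-crossing pair rooted at $y$ of the size required by \Cref{lem:wc}, which yields an immersion model of the $t$-onion star and hence the first outcome. In the second case I trim $A$ and $B$ to size $k$ (possible since $n\geq k$), obtaining $2k$ pairwise arc-disjoint paths, $k$ of them from $y$ to $Z$ and $k$ from $Z$ to $y$, which is the second outcome.

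I do not expect a genuine obstacle in this argument: once the reduction in the first paragraph is in place, it is just a direct combination of \Cref{thm:thomasen} and \Cref{lem:wc} with elementary size bookkeeping. The one point to get exactly right is the verification that the induced-biclique outcome of \Cref{thm:thomasen} satisfies every clause of the definition of a well-crossing pair rooted at $y$; this is precisely where passing from ``$\cQ$ ends at $y$'' (as in \Cref{thm:onion-harvesting}) to ``$\cP$ ends at $Z$ and $\cQ$ starts at $Z$'' here costs nothing, since the far endpoints of the paths play no role either in the notion of a well-crossing pair or in arc-disjointness. I also note that the argument never uses that $\cP$ and $\cQ$ are \emph{maximum} families --- only their sizes, arc-disjointness within each family, and simplicity are needed.
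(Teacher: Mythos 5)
Your proposal is correct and follows essentially the same route as the paper: apply Thomason's theorem (\Cref{thm:thomasen}) to the bipartite intersection graph of $(\cP,\cQ)$, then feed the complete outcome (a well-crossing pair rooted at $y$) into the Onion Harvesting Lemma (\Cref{lem:wc}) and read off the anti-complete outcome as the desired arc-disjoint family. The only cosmetic difference is that you define $g(t,k)=b(N)$ abstractly while the paper plugs in the explicit bound $2^N(N-1)+1$; your side remarks (maximality unused, the role of $Z$ irrelevant) are also accurate.
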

%\needspace{3\baselineskip}
\begin{proof}[Proof of \Cref{lem:onion-harvesting}]
We define \[
	g(t,k)=2^N(N-1)+1
\] where $N=\max\{k,F(t)\}$ and $F$ is the function given by \Cref{lem:wc}. 
%Let $\cP$ and $\cQ$ be families of pairwise arc-disjoint simple paths such that elements of $\cP$ start at $y$ and end at $Z$, elements of $\cQ$ start at $Z$ and end at $y$, and $\min\{|\cP|,|\cQ|\}\geq g(t,k)$.
By \Cref{thm:thomasen} applied to the intersection graph of $(\cP,\cQ)$,
there exist families $\cP' \subseteq \cP$ and $\cQ' \subseteq \cQ$ of paths such that $|\cP'|,|\cQ'|\geq N$ and either $(\cP',\cQ')$ is a well-crossing pair, or  $A(\cP')\cap A(\cQ')=\emptyset$.
In the first case, we conclude by \Cref{lem:wc} the existence of an immersion model of a $t$-onion-star in $D$.
As of the second outcome, it is precisely as desired.
\end{proof}

We are now ready to prove \Cref{thm:no-cut}. 

\begin{proof}[Proof of \Cref{thm:no-cut}] 
Define function $f$ as
\[f(t)=2t\cdot g^\comp{4t^2}_t(2),\]
where $g_t(k)\coloneqq g(t,k)$ is the function from \Cref{lem:onion-harvesting}.

We arbitrarily enumerate the vertices of $X$ as $y,x_1,\ldots,x_{2t}$. 
Consider an auxiliary digraph $D'$ obtained from $D$ by adding a single vertex $v$, $\frac{f(t)}{2t}$ arcs $vx_i$ and $\frac{f(t)}{2t}$ arcs $x_iv$ for every $i \in [2t]$.
First, observe that in $D'$ there exists a family of $f(t)$ arc-disjoint $(v,y)$-paths.
Indeed, let $\cP'$ be the maximum family of arc-disjoint $(v,y)$-paths, and let $k$ be the size of $\cP'$.
By \Cref{thm:menger}, there exists a $(v,y)$-cut $(A,B)$ of size $k$. 
Since $|\delta^+(v)|=f(t)$, we have $k \leq f(t)$. 
Observe that if $k<f(t)$, then $A\setminus \{v\} \neq \emptyset$, which implies that there exists $i \in [2t]$ such that $x_i \in A$. 
However, in such a case, $(A\setminus\{v\},B)$ is an $(x_i,y)$-cut in $D$ of size $k < f(t)$ contradicting the assumption of the theorem.
Hence, $|\cP'| = f(t)$.
By a symmetric argument we obtain the existence of a family $\cQ'$ of $f(t)$ arc-disjoint $(y,v)$-paths in $D'$.
Clearly, we can assume that $\cP'$ and $\cQ'$ consist of simple paths.

Let $\cP$ and $\cQ$ respectively denote the families of paths $\cP'$ and $\cQ'$ once every path has been restricted to the arcs of $D$.
Note that each $P \in \cP$ (resp.~$Q \in \cQ$) is an $(x,y)$-path (resp.~$(y,x)$-path) for some $x \in X \setminus \{y\}$.
Moreover, for every $i \in [2t]$, the number of $(x_i,y)$-paths in $\cP$ (resp.~$(y,x_i)$-paths in $\cQ$) is precisely $\frac{f(t)}{2t}$.
Hence, if we denote by $\cP_i$ the subset of $\cP$ that consists of $(x_i,y)$-paths, for every $i \in [2t]$, we have that $|\cP_i|=\frac{f(t)}{2t}=g^\comp{4t^2}_t(2)$.
Similarly, by $\cQ_i$ we denote the subset of $\cQ$ that consists of $(y,x_i)$-paths, and for every $i \in [2t]$ we have that $|\cQ_i|=g^\comp{4t^2}_t(2)$.

Let $p \in [4t^2]$.
We fix an arbitrary order on the elements of $[2t] \times [2t]$, and for the $p$-th element $(i,j)$ we proceed as follows.
We apply \Cref{lem:onion-harvesting} to families $\cP_i$ and $\cQ_j$, setting $Z=\{x_i,x_j\}$, $y$, and $k=g^\comp{4t^2-p}_t(2)$.
Either we conclude that there exists an immersion model of a $t$-onion-star in $D$, or replace $\cP_i$ and $\cQ_j$ by their subsets $\cP'_i$ and $\cQ'_j$ of size $g^\comp{4t^2-p}_t(2)$.
Clearly the theorem holds if the first case occurs.
In the other case, for every $i' \neq i, j' \neq j$, we reduce sets $\cP_{i'}$ and $\cQ_{j'}$ arbitrarily so that each of them contain exactly $g^\comp{4t^2-p}_t(2)$ paths, and proceed to the next iteration.

After $4t^2$ iterations, if no onion-star immersion model is found, for every $i$ the families $\cP_i$ and $\cQ_i$ have precisely $g^\comp{0}(2)=2$ elements each.
Let $\cS_i=\cP_i \cup \cQ_i$ and $\cS=\bigcup_{i\in [2t]} \cS_i$. 
We observe that paths from $\cS$ are now pairwise arc-disjoint, and each $\cS_i$ contains two paths $x_i \to y$ and two paths $y \to x_i$ for every $i \in [2t]$.
Hence, there exists an $t$-onion-star immersion model in~$D$, with center being $y$.
This concludes the proof.
\end{proof}

\bibliographystyle{alpha}
\bibliography{main}

\end{document}